\newtheorem{theorem}{Theorem}[section]
\newtheorem{prop}[theorem]{Proposition}
\newtheorem{lemma}[theorem]{Lemma}
\newtheorem{cor}[theorem]{Corollary}
\newtheorem{definition}[theorem]{Definition}
\newenvironment{remark}{\vspace{0.25cm} \noindent \textbf{Remark}.}{\vspace{0.25cm}}
\numberwithin{equation}{section}
\newcommand{\ra}{\rightarrow}
\newcommand{\bb}[1]{\mathbb{#1}}
\newcommand{\Z}{\bb{Z}}
\newcommand{\Q}{\bb{Q}}
\newcommand{\R}{\bb{R}}
\newcommand{\p}{\mathfrak{p}}
\newcommand{\pp}{\mathfrak{P}}
\newcommand{\q}{\mathfrak{q}}
\newcommand{\qq}{\mathfrak{Q}}
\newcommand{\ten}{\otimes}
\newcommand{\teno}[1]{\ten_{#1}}
\newcommand{\gal}[1]{\mathrm{Gal}(#1)}
\newcommand{\br}[1]{\bar{#1}} 
\newcommand{\st}{^\times}
\newcommand{\ie}{i.e. }
\newcommand{\can}{\simeq}
\newcommand{\Hom}{\mathrm{Hom}}
\newcommand{\setm}{\! \smallsetminus \!}
\newcommand{\Spec}{\mathrm{Spec}}
\newcommand{\oh}{\mathcal{O}}
\newcommand{\class}{\mathrm{Cl}}
\newcommand{\spc}{\mbox{ }}
\newcommand{\con}{\subseteq}
\newcommand{\sat}{\spc | \spc}
\newcommand{\chdot}{_{\textrm{\tiny{$\bullet$}}}}
\newcommand{\Ker}{\mathrm{Ker}}
\newcommand{\im}{\mathrm{Im}}
\newcommand{\map}{\mathrm{Map}}
\newcommand{\ab}{^{\mathrm{ab}}}
\newcommand{\frob}{\varphi}
\newcommand{\Ext}{\mathrm{Ext}}
\newcommand{\aug}[1]{\Delta #1}
\newcommand{\raug}[2]{\Delta_{#1} #2}
\newcommand{\augm}{\mathrm{aug}}
\newcommand{\ur}{^{\mathrm{ur}}}
\newcommand{\rc}[1]{\overline{#1}}
\newcommand{\commut}[2]{[#1,#2]}
\newcommand{\minigap}{\vspace{0.1cm}}
\newcommand{\frakc}{\mathfrak{c}}
\newcommand{\calF}{\mathcal{F}}
\newcommand{\inorm}{\mathrm{Nm}}
\newcommand{\calC}{\mathcal{C}}
\newcommand{\calV}{\mathcal{V}}
\newcommand{\Left}{_\mathrm{left}}
\newcommand{\calH}{\mathcal{H}}
\begin{document}


\baselineskip=17pt



\title[Connecting homomorphisms associated to Tate sequences]{Connecting homomorphisms associated to Tate sequences}

\author[P. R. Buckingham]{Paul Richard Buckingham}
\address{Department of Mathematical and Statistical Sciences \\
632 CAB \\
University of Alberta \\
Edmonton AB T6G 2G1 \\
Canada}
\email{p.r.buckingham@ualberta.ca}

\date{}

\begin{abstract}
Tate sequences are an important tool for tackling problems related to the (ill-understood) Galois structure of groups of $S$-units. The relatively recent Tate sequence ``for small $S$'' of Ritter and Weiss allows one to use the sequence without assuming the vanishing of the $S$-class-group, a significant advance in the theory. Associated to Ritter and Weiss's version of the sequence are connecting homomorphisms in Tate cohomology, involving the $S$-class-group, that do not exist in the earlier theory. In the present article, we give explicit descriptions of certain of these connecting homomorphisms under some assumptions on the set $S$.
\end{abstract}

\subjclass[2010]{Primary 11R29; Secondary 11R34}

\keywords{Tate sequence, Class-group, Galois cohomology}

\maketitle

\section{Introduction} \label{intro}

In \cite{tate:tori,tate:stark}, Tate constructs, for a Galois extension $L/K$ of number fields with Galois group $G$, a \emph{Tate sequence} $0 \ra \oh_{L,S}\st \ra A \ra B \ra X \ra 0$ of $\Z[G]$-modules whose extension class in $\Ext_{\Z[G]}^2(X,\oh_{L,S}\st)$ is the so-called \emph{Tate canonical class}, where $\oh_{L,S}\st$ is the group of $S$-units in $L$ and $X$ is a finitely generated, $\Z$-torsion-free module which will be defined in Section \ref{notation}. This is done under the assumptions that $S$ contains the ramified places and that the $S$-class-group of $L$ is trivial.

Tate's construction shows that $A$ and $B$ may in fact be chosen finitely generated and cohomologically trivial. This has the consequence that the Galois cohomology of $\oh_{L,S}\st$ can be identified with that of $X$, after a dimension shift of $2$. This is reminiscent of the Artin--Tate formulation of class field theory \cite{at:cft} in which the cohomology of the idele class-group is identified with the cohomology of $\Z$ after a dimension shift of $2$, with a similar statement in the local case. In fact, this is no coincidence, since it is via this interpretation of class field theory that the Tate sequence is constructed.

The sequence is a primary feature in numerous applications to multiplicative Galois module structure. Tate uses it in \cite[Ch. II]{tate:stark} to prove Stark's Conjecture for rational characters, and Chinburg employs it in the construction of his third $\Omega$-invariant, which is central to his root number conjecture -- see \cite{chinburg:galstruct,chinburg:exactseq}. In fact, the Tate sequence also appears in the definition of the lifted $\Omega$-invariant in the Lifted Root Number Conjecture of Gruenberg--Ritter--Weiss \cite{grw:lrnc}. Further, a variant of the sequence is used to construct the equivariant Tamagawa number in the Equivariant Tamagawa Number Conjecture (ETNC) for the motive $h^0(\Spec(L))(0)$, as in Burns--Flach \cite{bf:tamagawa2} for example.

A significant step forward in the theory of Tate sequences was Ritter and Weiss's Tate sequence ``for small $S$'', which allows the sequence to be constructed for an arbitrary set of places $S$. More precisely, they found in \cite{rw:tateforglobal} a sequence
\begin{equation} \label{tate sequence}
0 \ra \oh_{L,S}\st \ra A \ra B \ra \nabla \ra 0
\end{equation}
for a Galois extension $L/K$ of number fields with Galois group $G$, with no restriction on the set $S$ other than that it contains the infinite places. Whereas in the preceding theory of Tate sequences, the module in place of $\nabla$ was the $\Z$-torsion-free $\Z[G]$-module $X$ whose Galois structure is described in terms of very basic arithmetic information, $\nabla$ itself fits into an exact sequence
\begin{equation} \label{nabla extension}
0 \ra \class_S(L) \ra \nabla \ra X \ra 0 .
\end{equation}
An early use of this refined Tate sequence ``for small $S$'' was in the proof by Ritter and Weiss \cite{rw:cohomology} of the Strong Stark Conjecture for abelian extensions of $\Q$. More recent applications have been the consideration of the minus part of the ETNC for tame extensions, as in Nickel \cite{nickel:tame1}, and the study of Fitting ideals of (duals of) minus parts of class-groups, as in Greither \cite{greither:fitting}.

Despite the sequence's growing use in multiplicative Galois module structure, one aspect of it that has not yet been determined is the precise effect of the connecting homomorphisms that are naturally associated to the sequence. Our aim is to make certain such maps explicit, under some assumptions on the set $S$, namely that $S$ will contain the infinite and ramified places and at least one place with full decomposition group. We expect to be able to remove this last assumption in the future. However, we emphasize that the $S$-class-group will not be assumed trivial. The maps we make explicit are the connecting homomorphism $H^{-2}(G,X) \ra H^{-1}(G,\class_S(L))$ associated to (\ref{nabla extension}), and the map $H^{-1}(G,\class_S(L)) \ra H^1(G,\oh_{L,S}\st)$ that results from $H^{-1}(G,\class_S(L)) \ra H^{-1}(G,\nabla)$ together with the appropriate connecting homomorphisms (\ie starting in dimension $-1$ then $0$) obtained by splitting (\ref{tate sequence}) into two short exact sequences.

After building up preliminaries in Sections \ref{notation}, \ref{sec: group coh}, \ref{modules} and \ref{sec: image of snake}, we compute the maps in Sections \ref{X Cl map} and \ref{Cl U map}, and give some corollaries in Section \ref{cors}.

\section{Notation, assumptions and conventions} \label{notation}

We will assume throughout that $L/K$ is Galois with Galois group $G$. $S$ will denote a finite set of places of $K$ containing the infinite and ramified places. By a minor abuse of notation, $\oh_{L,S}$ will denote the $S_L$-integers in $L$, where $S_L$ consists of the places of $L$ above those in $S$. Thus $\oh_{L,S}\st$ is the group of $S_L$-units in $L$. The $S$-class-group of $K$ will be written $\class_S(K)$, and the $S_L$-class-group of $L$ will be just $\class_S(L)$, the redundant subscript $L$ again being dropped.

By $L_S$, we mean the Hilbert $S_L$-class field of $L$, that is, the maximal unramified abelian extension of $L$ in which all places in $S_L$ split completely. $\gal{L_S/L}$ is isomorphic, via the Artin map, to $\class_S(L)$.

We will denote the element $\sum_{\sigma \in G} \sigma \in \Z[G]$ by $N$. Multiplication by $N$ induces an endomorphism of $\class_S(L)$, but we stress that this is different from the map $\inorm : \class_S(L) \ra \class_S(K)$, induced by the norm of ideals, that is featured in Section \ref{cors}.

All number fields will lie in a fixed algebraic closure $\br{\Q}$ of $\Q$. If $\p$ is a place of $K$, we will fix once and for all a place of $\br{\Q}$ above $\p$. Then given any number field $F$ containing $K$, $\p(F)$ will denote the place of $F$ below the chosen place of $\br{\Q}$ above $\p$. For shorthand, we will denote the completion $F_{\p(F)}$ by $F_\p$, and if $F/K$ is Galois, the decomposition group $\gal{F/K}_{\p(F)}$ will be denoted simply $\gal{F/K}_\p$. Note that if $\p \in S$, then since $\p(L)$ splits completely in $L_S$, restriction $\gal{L_S/K}_\p \ra G_\p$ defines an isomorphism, and we denote the composition
\[ G_\p \stackrel{\can}{\ra} \gal{L_S/K}_\p \ra \gal{L_S/K} \]
by $\iota_\p$.

For a finite place $\p$ of $K$, let $\tilde{\frob}_\p$ denote a lift in $\gal{L_\p\ur/K_\p}$ of the Frobenius of $K_\p\ur/K_\p$, and $\br{\frob}_\p$ its image in $\gal{L_S/K}$. Further, if $\p$ is unramified in $L/K$, $\frob_\p$ will be the associated Frobenius element in $G_\p$.

An important object appearing throughout the article is the Galois module $X$ defined in Definition \ref{def X}:

\begin{definition} \label{def X}
Let $Y$ be the free abelian group on $S_L$, and $X$ the kernel of the augmentation map $Y \ra \Z$ sending every place $\pp \in S_L$ to $1$.
\end{definition}

To give some context, we remark that $X$ appears in the Dirichlet regulator map, \ie the isomorphism
\begin{eqnarray*}
\R \teno{\Z} \oh_{L,S}\st &\ra& \R \teno{\Z} X \\
1 \ten u &\mapsto& \sum_{w \in S_L} \log \|u\|_w w ,
\end{eqnarray*}
where the absolute values $\|\cdot\|_w$ are normalized in the particular canonical way which makes the product formula hold, as in \cite[Ch. III, Section 1]{neukirch:alg}.

For any group $G$, $\aug{G}$ will denote its augmentation ideal, that is, the kernel of the augmentation map $\Z[G] \ra \Z$ which sends each group element to $1$.

\subsection{Key assumptions}

In Section \ref{modules} and Section \ref{sec: desc s}, we assume that $L/K$ is Galois and $S$ contains the ramified places. In Section \ref{sec: ext of nabla} and Sections \ref{X Cl map}, \ref{Cl U map} and \ref{cors}, we further assume that there is a place $\p_0 \in S$ such that $G_{\p_0} = G$. We observe that this last assumption forces $G$ to be solvable, since then $G$ is the Galois group of a Galois extension of local fields.

\section{Group cohomology} \label{sec: group coh}

Cohomology will be Tate cohomology throughout. We will use the following model for Tate cohomology groups in negative degrees, which we shall think of as homology groups, that is $H^i(G,A) = H_{-i-1}(G,A)$ for $i < -1$: For $n \geq 0$, let $P_n$ be the free \emph{right} $\Z[G]$-module on $G^n$, and for $n \geq 1$ define a boundary map $d_n : P_n \ra P_{n-1}$ by sending $[\sigma_1,\ldots,\sigma_n]$ to
\[ [\sigma_1,\ldots,\sigma_{n-1}] \cdot \sigma_n + \sum_{i=1}^{n-1} (-1)^{n-i} [\sigma_1,\ldots,\sigma_i \sigma_{i+1},\ldots,\sigma_n] + (-1)^n [\sigma_2,\ldots,\sigma_n] .\]
Then $P\chdot \stackrel{\augm}{\ra} \Z \ra 0 $ is a free resolution of $\Z$ as a \emph{right} $\Z[G]$-module, and for a \emph{left} $\Z[G]$-module $A$, we view $H_i(G,A)$ as the $i$th homology group of the chain complex $P\chdot \teno{\Z[G]} A$ when $i > 0$. The Tate cohomology group $H^{-1}(G,A)$ will be taken to mean
\begin{equation} \label{def H-1}
\frac{\{a \in A \sat \sum_{\sigma \in G} \sigma a = 0\}}{\Z \{(\sigma - 1)a \sat \sigma \in G, a \in A\}} .
\end{equation}
Note that the denominator in (\ref{def H-1}) is indeed a $\Z[G]$-submodule, \ie is closed under the action of $G$.

\begin{lemma} \label{Hab iso}
Let $G$ be a finite group and $H$ a subgroup. There is a well-defined group isomorphism
\[ H\ab \ra H^{-2}(G,\Z[G] \teno{\Z[H]} \Z) \]
sending $\sigma \commut{H}{H}$ to $\rc{[\sigma] \ten 1 \ten 1}$.
\end{lemma}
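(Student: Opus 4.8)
The plan is to identify $H^{-2}(G, \Z[G] \teno{\Z[H]} \Z)$ with $H_1(G, \Z[G] \teno{\Z[H]} \Z)$ using the homological model from Section \ref{sec: group coh}, then compute this homology group via Shapiro's Lemma, and finally track the image of $\sigma\commut{H}{H}$ under $H\ab \iso H_1(H,\Z)$ through the Shapiro isomorphism down to $\overline{[\sigma]\ten 1\ten 1}$.

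First I would recall that $H^{-2}(G,M) = H_1(G,M)$, which by the stated model is the first homology of $P\chdot \teno{\Z[G]} M$ for $M = \Z[G]\teno{\Z[H]}\Z$. Since $M$ is an induced (equivalently, coinduced, as $[G:H] < \infty$) module, Shapiro's Lemma gives a natural isomorphism $H_i(G, \Z[G]\teno{\Z[H]}\Z) \iso H_i(H,\Z)$ for all $i \geq 1$; in particular $H_1(G,M) \iso H_1(H,\Z) \iso H\ab$, where the last isomorphism is the standard one sending a cycle $[\sigma]\ten 1$ (in the bar resolution for $H$) to $\sigma\commut{H}{H}$. Composing, we obtain a well-defined group isomorphism $H\ab \ra H^{-2}(G, \Z[G]\teno{\Z[H]}\Z)$; what remains is to check it has the claimed formula. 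For this I would write down the Shapiro map at the chain level. An element $[\sigma]\ten 1 \in P_1 \teno{\Z[H]} \Z$ (the degree-$1$ part of the complex computing $H_*(H,\Z)$) maps under the explicit chain-level Shapiro isomorphism $P\chdot \teno{\Z[H]}\Z \ra P\chdot \teno{\Z[G]} (\Z[G]\teno{\Z[H]}\Z)$ — which is essentially the canonical identification $P_n \teno{\Z[H]}\Z \iso P_n \teno{\Z[G]}\Z[G]\teno{\Z[H]}\Z$ — to $[\sigma]\ten(1\ten 1) = [\sigma]\ten 1 \ten 1$. Checking that $[\sigma]$ is a cycle in degree $1$ (it is, since $\sum_{\tau\in G}$ annihilates it in the relevant quotient... more precisely $d_1[\sigma] = [\,]\cdot\sigma + (-1)[\,] = [\,](\sigma - 1)$, which vanishes after tensoring with the appropriate coefficients for homology in degree $\geq 1$) completes the identification of its class with $\overline{[\sigma]\ten 1\ten 1}$.

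The bookkeeping is the real content here: one must be careful about the right-module conventions for $P\chdot$, about whether to use the induced or coinduced description of $M$ (these agree since $H$ has finite index, but the Shapiro isomorphism looks slightly different in each case), and about the precise normalization of the classical isomorphism $H_1(H,\Z) \iso H\ab$. The main obstacle, modest as it is, will be verifying \emph{well-definedness} of the resulting map: one should confirm that $\sigma \mapsto \overline{[\sigma]\ten 1\ten 1}$ kills commutators $\commut{H}{H}$ directly (so that the map factors through $H\ab$), which amounts to the computation $[\sigma\tau]\ten 1\ten 1 - [\sigma]\ten 1\ten 1 - [\tau]\ten 1\ten 1 \equiv 0$ modulo the image of $d_2$, using $d_2[\sigma,\tau] = [\sigma]\cdot\tau - [\sigma\tau] + [\tau]$ and the fact that $\tau \in H$ acts trivially on $1\ten 1 \in \Z[G]\teno{\Z[H]}\Z$. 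Injectivity and surjectivity then follow formally from Shapiro's Lemma together with the classical isomorphism $H_1(H,\Z) \iso H\ab$. I would present the argument as: (i) invoke the homological model to replace $H^{-2}$ by $H_1$; (ii) apply Shapiro; (iii) verify the explicit formula at chain level; (iv) deduce well-definedness and bijectivity.
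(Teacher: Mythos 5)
Your proposal is correct and follows essentially the same route as the paper: the paper's proof is precisely ``Shapiro's Lemma together with $H^{-2}(H,\Z)\can H\ab$,'' with the explicit chain-level description left to the reader, and your steps (i)--(iv) simply fill in those omitted details (the only quibble being that the degree-$1$ cycle condition holds because $\sigma\in H$ fixes $1\ten 1$, not because of the degree). No gap; this matches the intended argument.
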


\begin{proof}
That $H\ab$ is isomorphic to $H^{-2}(G,\Z[G] \teno{\Z[H]} \Z)$ is simply Shapiro's Lemma -- as in \cite[Lemma 6.3.2]{weibel:ha} for example -- together with the fact that $H^{-2}(H,\Z) \can H\ab$. The explicit description of the isomorphism is left to the reader.
\end{proof}

\subsection{Extension classes}

Suppose that $A$ and $C$ are $\Z[G]$-modules and that $C$ is $\Z$-free.

\begin{lemma} \label{ext and h1}
There is a canonical isomorphism
\[ \Ext_{\Z[G]}^1(C,A) \can H^1(G,\Hom_\Z(C,A)) .\]
\end{lemma}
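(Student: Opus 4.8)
The plan is to exhibit the isomorphism $\Ext_{\Z[G]}^1(C,A) \can H^1(G,\Hom_\Z(C,A))$ by choosing an explicit free resolution of $C$ as a $\Z[G]$-module and comparing it with the standard resolution computing group cohomology. First I would recall the general principle: for any $\Z[G]$-module $C$, if $F\chdot \to C$ is a resolution by $\Z[G]$-modules which are relatively projective (equivalently, $\Z$-split exact as a complex of abelian groups), then $\Ext_{\Z[G]}^i(C,A)$ can be computed as the cohomology of $\Hom_{\Z[G]}(F\chdot, A)$. Since $C$ is $\Z$-free by hypothesis, I can take $F\chdot$ to be $P\chdot \teno{\Z} C$ — the standard bar-type resolution $P\chdot \to \Z$ of the trivial module (as used in Section~\ref{sec: group coh}) tensored over $\Z$ with $C$, with the diagonal $G$-action. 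Because $P\chdot \to \Z$ is a $\Z$-split resolution by free $\Z[G]$-modules and $C$ is $\Z$-free, $P\chdot \teno{\Z} C \to C$ is a resolution of $C$ by $\Z[G]$-modules that are $\Z$-free, hence relatively projective, so it computes $\Ext_{\Z[G]}^\bullet(C,A)$.

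Next I would apply the tensor-hom adjunction over $\Z$: there is a natural isomorphism of cochain complexes
\[ \Hom_{\Z[G]}(P\chdot \teno{\Z} C, A) \can \Hom_{\Z[G]}(P\chdot, \Hom_\Z(C,A)), \]
where $\Hom_\Z(C,A)$ carries the usual conjugation $G$-action $(\sigma f)(c) = \sigma f(\sigma^{-1}c)$. Taking cohomology in degree $1$ of the right-hand side gives exactly $H^1(G,\Hom_\Z(C,A))$, since $P\chdot$ is the chosen free resolution of $\Z$ computing group cohomology. Combining this with the previous paragraph yields the claimed isomorphism, and it is canonical because every step — the adjunction and the identification of the relevant Ext and cohomology groups with cohomology of these complexes — is natural in $C$ and $A$.

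The step I expect to be the main (though still mild) obstacle is the verification that $P\chdot \teno{\Z} C \to C$ is genuinely a resolution computing $\Ext_{\Z[G]}^\bullet(C,A)$: one must check both exactness (which follows from $\Z$-splitness of $P\chdot \to \Z$ tensored with the flat — indeed free — $\Z$-module $C$) and the relative projectivity needed to compare with Ext (a $\Z[G]$-module that is $\Z[G]$-free tensored over $\Z$ with an arbitrary $\Z$-free module is relatively $\Z$-projective, hence acyclic for $\Hom_{\Z[G]}(-,A)$ up to the relevant dimension-shifting — in fact, since $C$ is only assumed $\Z$-free and not $\Z[G]$-projective, one should note $P_n \teno{\Z} C$ need not be $\Z[G]$-free but is a direct summand of a free module after base change, or simply invoke that it is induced from the trivial subgroup tensored suitably). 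One clean way to sidestep subtleties is to observe $P_n \teno{\Z} C \iso \Z[G] \teno{\Z} (\,\text{underlying abelian group of } P_n^{\text{triv}} \ten C\,)$ is a relatively free $\Z[G]$-module in the sense of being induced from $\{1\}$, hence projective relative to $\Z[G]/\Z$, which is exactly what is needed. Alternatively, for a self-contained treatment one can cite the standard fact (e.g.\ in Brown's or Weibel's book, in the spirit of Lemma~\ref{Hab iso}'s appeal to Shapiro) that relatively injective resolutions suffice to compute Tate/ordinary cohomology and dualize. Given the level of the surrounding exposition, I would keep this proof short, state the adjunction explicitly, and relegate the acyclicity bookkeeping to a one-line remark with a reference.
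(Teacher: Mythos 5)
Your argument is correct, but it takes a genuinely different route from the paper. The paper's proof is the classical cocycle-level one: given an extension $0 \ra A \ra B \ra C \ra 0$, choose a $\Z$-linear section $s : C \ra B$ (possible since $C$ is $\Z$-free) and record the $1$-cocycle $\sigma \mapsto \sigma s - s$ with values in $\Hom_\Z(C,A)$; conversely, a cocycle $f$ is turned into an extension by twisting the $G$-action on $A \oplus C$ as in (\ref{G-action for h1}). Your proof instead resolves $C$ by $P\chdot \teno{\Z} C$ with the diagonal action and applies tensor-hom adjunction. One remark simplifies your main worry: since $C$ is $\Z$-free, the untwisting isomorphism $\Z[G] \teno{\Z} C \ra \Z[G] \teno{\Z} C_0$ (sending $g \ten c \mapsto g \ten g^{-1}c$, where $C_0$ is the underlying abelian group) shows that each $P_n \teno{\Z} C$ is honestly $\Z[G]$-\emph{free}, so no relative homological algebra is needed at all; indeed your appeal to the principle that relatively projective, $\Z$-split resolutions compute $\Ext_{\Z[G]}$ is not quite right as stated (that computes relative Ext), and it is the genuine freeness just noted -- or, in degree $1$, the fact that every extension by a $\Z$-free quotient is $\Z$-split -- that makes the conclusion about absolute $\Ext^1_{\Z[G]}$ legitimate. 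What each approach buys: yours is cleaner, canonical, and gives the isomorphism in all degrees at once; the paper's sketch, though it needs the routine verification that the two constructions are mutually inverse, produces the explicit formula (\ref{G-action for h1}) and the explicit cocycle attached to an extension, and these formulas are used later in the paper (in the concrete description of $\calV$ in Section \ref{modules} and in the cup-product computations of Lemma \ref{conn as cup} and Section \ref{X Cl map}), so if you adopt the abstract proof you would still need to extract that explicit dictionary as a supplement.
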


\begin{proof}
Since this is well known, we only sketch the proof. Supposing we have an exact sequence $0 \ra A \ra B \ra C \ra 0$ of $\Z[G]$-modules, choose a section $s : C \ra B$ of the $\Z$-module homomorphism $B \ra C$, so that $s \in \Hom_\Z(C,B)$ maps to $\mathbf{1}_C$ in $\Hom_\Z(C,C)$. This is possible by the assumption on $C$. Then for each $\sigma \in G$, $\sigma s - s$ is the image of a unique $f_\sigma \in \Hom_\Z(C,A)$. The map $\sigma \mapsto f_\sigma$ is a $1$-cocycle $G \ra \Hom_\Z(C,A)$.

Conversely, given a $1$-cocycle $f : G \ra \Hom_\Z(C,A)$, we can endow the direct sum $B = A \oplus C$ of $\Z$-modules with a $G$-action by
\begin{equation} \label{G-action for h1}
\sigma(a,c) = (\sigma a + f(\sigma)(\sigma c),\sigma c) .
\end{equation}
One checks that these constructions pass to mutually inverse maps between $\Ext_{\Z[G]}^1(C,A)$ and $H^1(G,\Hom_\Z(C,A))$.
\end{proof}

\begin{lemma} \label{conn as cup}
Suppose $0 \ra A \ra B \ra C \ra 0$ is an exact sequence of $\Z[G]$-modules such that $C$ is $\Z$-free. Viewing its extension class $\xi$ as an element of $H^1(G,\Hom_\Z(C,A))$, the connecting homomorphism $H^i(G,C) \ra H^{i+1}(G,A)$ is given by following cup-product with $\xi$ by the evaluation map
\[ H^{i+1}(G,C \teno{\Z} \Hom_\Z(C,A)) \ra H^{i+1}(G,A) .\]
\end{lemma}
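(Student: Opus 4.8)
The plan is to reduce the statement to the well-known compatibility between connecting homomorphisms in the long exact cohomology sequence and cup products, using the explicit cocycle description of $\xi$ supplied by the proof of Lemma \ref{ext and h1}. First I would fix a right $\Z[G]$-free resolution $P\chdot \to \Z$ of $\Z$ (say the bar-type resolution recalled in Section \ref{sec: group coh}, so that the statement is uniform across all degrees $i$, including negative ones), and represent a class in $H^i(G,C)$ by a cocycle $z \in \Hom_{\Z[G]}(P_i, C)$ (or, in homological degrees, by an element of $P_{-i-1}\teno{\Z[G]} C$; both cases run in parallel). The evaluation map $C \teno{\Z} \Hom_\Z(C,A) \to A$ is a map of $\Z[G]$-modules, so it induces a map on $H\codot(G,-)$; the content of the lemma is that the composite of cup-product-with-$\xi$ followed by this evaluation map computes the connecting map $\partial$ attached to the sequence $0 \to A \to B \to C \to 0$.

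The key step is to trace a class through $\partial$ using the section $s : C \to B$ from the proof of Lemma \ref{ext and h1}. Given a cocycle $z$ representing a class in $H^i(G,C)$, one lifts it along $s$ to the cochain $s \of z$ with values in $B$ (a lift exists precisely because $s$ is a $\Z$-linear, $G$-equivariant-up-to-$\Hom_\Z(C,A)$ section), applies the differential, and observes that the result lands in the image of $A$; by definition this represents $\partial$ of the class of $z$. Now the failure of $s$ to be $G$-equivariant is exactly encoded by the cocycle $f_\sigma \in \Hom_\Z(C,A)$ with $\sigma s - s = \iota \of f_\sigma$, which is precisely the cocycle representing $\xi$ under the isomorphism of Lemma \ref{ext and h1}. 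Carrying out the differential on $s \of z$ and collecting terms, the $A$-valued cochain one obtains is visibly the one produced by forming the cup product $\xi \cup [z] \in H^{i+1}(G, \Hom_\Z(C,A) \teno{\Z} C)$ (using the standard front-face/back-face formula for cup products on the bar resolution) and then applying the evaluation pairing $\Hom_\Z(C,A)\teno{\Z} C \to A$, up to reordering the tensor factors to match the order $C \teno{\Z} \Hom_\Z(C,A)$ used in the statement.

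The main obstacle I expect is bookkeeping rather than anything conceptual: making the chain-level computation match the conventions of Section \ref{sec: group coh} in \emph{all} degrees simultaneously, in particular across the boundary between cohomological ($i \geq 0$) and homological ($i \leq -2$) degrees, and the separate treatment of $i = -1$ with the explicit quotient description (\ref{def H-1}). To keep this clean I would either (i) phrase everything in terms of complete resolutions so that a single formula handles every $i$, remarking that cup products and connecting maps are computed on such resolutions in the standard way, or (ii) verify the identity in nonnegative degrees by the chain-level argument above and then propagate it to negative degrees by dimension-shifting (replacing $C$ by a syzygy in a short exact sequence $0 \to C' \to F \to C \to 0$ with $F$ cohomologically trivial and $\Z$-free, and using the compatibility of $\partial$ and $\cup$ with the resulting connecting maps). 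Since the result is standard — this is the statement that the connecting homomorphism of an extension is cup product with its Yoneda/extension class — I would keep the write-up brief, pointing the reader to a reference such as \cite{weibel:ha} for the cup-product/connecting-map compatibility and only spelling out enough of the cocycle computation to confirm that the identification of $\xi$ in Lemma \ref{ext and h1} is the one that makes the formula hold on the nose.
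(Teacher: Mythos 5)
Your plan is correct, and it proves rather than quotes the standard fact: the paper's entire proof of this lemma is the citation \cite[Ch.~XII, Prop.~6.1]{ce:homalg}, so your chain-level argument is a genuinely different (more self-contained) route to the same statement. Your key step --- lifting an inhomogeneous cochain $z$ with values in $C$ to $s\of z$ with values in $B$, applying the differential, and recognising the resulting $A$-valued cochain as the evaluation of the cup product with the cocycle $\sigma \mapsto f_\sigma$ from Lemma~\ref{ext and h1} --- is exactly the classical proof, and your two options for handling all Tate degrees (complete resolutions, or verifying in nonnegative degrees and dimension-shifting) are both sound; note that the paper effectively takes your option (ii) implicitly later, since Lemma~\ref{cupping lemma} dimension-shifts and invokes the explicit degree~$-1$/degree~$0$ cup-product formula from \cite[Appendix to Ch.~XI, Lemma~2]{serre:localfields}. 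What the citation buys is brevity; what your computation buys is control over conventions, which is not idle here: the later arguments (the $-s$ normalisation of the extension class, Lemma~\ref{image of snake}, Proposition~\ref{image of conn}) need the formula to hold on the nose, so if you write your proof out you must fix the order of the factors $C \teno{\Z} \Hom_\Z(C,A)$ and track the sign introduced by swapping $\xi \cup z$ versus $z \cup \xi$, rather than leaving it ``up to reordering.'' With that bookkeeping pinned down, your write-up would be a legitimate replacement for the reference.
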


\begin{proof}
See \cite[Ch. XII, Prop. 6.1]{ce:homalg}.
\end{proof}

By Lemma \ref{ext and h1}, $\Ext_{\Z[G]}^1(\aug{G},A) \can H^1(G,\Hom_\Z(\aug{G},A))$ for each $\Z[G]$-module $A$. On the other hand, the exact sequence
\[ 0 \ra A \ra \Hom_\Z(\Z[G],A) \ra \Hom_\Z(\aug{G},A) \ra 0 \]
together with the cohomological triviality of $\Hom_\Z(\Z[G],A)$ gives an isomorphism $H^1(G,\Hom_\Z(\aug{G},A)) \ra H^2(G,A)$. Thus we have:

\begin{lemma} \label{h2 as ext}
For any $\Z[G]$-module $A$, $\Ext_{\Z[G]}^1(\aug{G},A) \can H^2(G,A)$.
\end{lemma}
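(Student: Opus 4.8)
The plan is to combine Lemma~\ref{ext and h1} with the long exact cohomology sequence attached to the tautological short exact sequence
\[ 0 \ra A \ra \Hom_\Z(\Z[G],A) \ra \Hom_\Z(\aug{G},A) \ra 0 \]
exactly as indicated in the paragraph preceding the statement. Concretely, I would first apply Lemma~\ref{ext and h1} with $C = \aug{G}$ (which is $\Z$-free, being a submodule of the free $\Z$-module $\Z[G]$) to obtain the canonical isomorphism $\Ext_{\Z[G]}^1(\aug{G},A) \can H^1(G,\Hom_\Z(\aug{G},A))$. So the task reduces to identifying $H^1(G,\Hom_\Z(\aug{G},A))$ with $H^2(G,A)$.

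Next I would produce the displayed short exact sequence: it is obtained by applying the left-exact functor $\Hom_\Z(-,A)$ to $0 \ra \aug{G} \ra \Z[G] \ra \Z \ra 0$, noting that the sequence of $\Hom$'s is in fact exact on the right because $\Z$ is $\Z$-free (so the sequence splits over $\Z$), and that each term carries the natural $G$-action making the maps $\Z[G]$-equivariant. The key input is that $\Hom_\Z(\Z[G],A)$ is a \emph{coinduced} module, hence cohomologically trivial, so $H^i(G,\Hom_\Z(\Z[G],A)) = 0$ for all $i$; here one should remember that over a finite group coinduced and induced modules coincide, which is why Tate cohomology vanishes in every degree. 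Feeding this into the long exact Tate cohomology sequence, the connecting maps $H^i(G,\Hom_\Z(\aug{G},A)) \ra H^{i+1}(G,A)$ are isomorphisms for every $i$; taking $i = 1$ gives $H^1(G,\Hom_\Z(\aug{G},A)) \can H^2(G,A)$.

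Composing the two isomorphisms yields $\Ext_{\Z[G]}^1(\aug{G},A) \can H^2(G,A)$, which is the claim. I would be a little careful to state that the composite is canonical: both constituent isomorphisms are natural in $A$, and the short exact sequence of $\Hom$-modules depends only on the fixed sequence $0 \ra \aug{G} \ra \Z[G] \ra \Z \ra 0$, not on any choices, so no ambiguity is introduced.

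There is no real obstacle here — the statement is essentially a bookkeeping exercise assembling Lemma~\ref{ext and h1}, the dimension-shifting short exact sequence, and the vanishing of Tate cohomology for coinduced modules. The only point that deserves an explicit word is the right-exactness of $\Hom_\Z(-,A)$ applied to the defining sequence of $\aug{G}$, which holds because that sequence is $\Z$-split; everything else is standard and can safely be left to the reader, as the paper does for the analogous Lemma~\ref{Hab iso}.
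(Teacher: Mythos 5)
Your argument is exactly the paper's: the lemma is stated as the summary of the preceding paragraph, which combines Lemma \ref{ext and h1} (with $C = \aug{G}$, $\Z$-free) with the dimension shift coming from $0 \ra A \ra \Hom_\Z(\Z[G],A) \ra \Hom_\Z(\aug{G},A) \ra 0$ and the cohomological triviality of the (co)induced module $\Hom_\Z(\Z[G],A)$. Your extra remarks on $\Z$-splitness of $0 \ra \aug{G} \ra \Z[G] \ra \Z \ra 0$ and on induced equalling coinduced for finite $G$ are correct and fill in the details the paper leaves implicit.
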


\section{The modules $W_{S'}$, $R_{S'}$ and $B_{S'}$} \label{modules}

\subsection{The modules $W_{S'}$ and $R_{S'}$}

We let $S'$ denote a finite set of places of $K$ containing $S$, and further satisfying:

\minigap

\begin{tabular}{rp{10cm}}
(i) & $\bigcup_{\p \in S'} G_\p = G$, \\
and (ii) & $\class_{S'}(L) = 0$.
\end{tabular}

\minigap

\noindent Such an $S'$ can always be chosen, by the Chebotarev Density Theorem. Following \cite[Section 1]{rw:tateforglobal}, we define
\[ W_{S'} = \bigoplus_{\p \in S} \raug{\p}{G} \oplus \bigoplus_{\q \in S' \setm S} \Z[G] ,\]
where $\raug{\p}{G}$ is the left ideal in $\Z[G]$ generated by $\aug{G_\p}$. We observe that this description of $W_{S'}$ relies on $S$ containing the ramified primes. For a treatment of the general case, see \cite{rw:tateforglobal} itself.

As in \cite[Section 4]{rw:tateforglobal}, we now define $R_{S'}$ to be the kernel of the map $W_{S'} \ra \aug{G}$ that is inclusion on $\raug{\p}{G}$ and left multiplication by $\frob_\q - 1$ on the copy of $\Z[G]$ corresponding to $\q \in S' \setm S$.

\subsection{The module $B_{S'}$} \label{def b}

After \cite[Section 4]{rw:tateforglobal}, we let $B_{S'}$ be the kernel of the map
\[ \bigoplus_{\p \in S'} \Z[G] \ra \Z[G] \]
that is the identity on the copy of $\Z[G]$ corresponding to $\p \in S$, and multiplication by $\frob_\q - 1$ on the copy of $\Z[G]$ corresponding to $\q \in S' \setm S$. We note that $B_{S'}$ is projective, and therefore cohomologically trivial. To provide more context, we note that $B_{S'}$ is the module $B$ appearing in (\ref{tate sequence}). The map $R_{S'} \ra B_{S'}$ in (\ref{rbx}) is induced by the inclusion map
\[ W_{S'} \ra \bigoplus_{\p \in S'} \Z[G] .\]
The map $B_{S'} \ra X$ is induced by the map
\[ \bigoplus_{\p \in S'} \Z[G] \ra Y \]
that is zero on $\Z[G]$ corresponding to $\q \in S' \setm S$, and sends $\alpha \in \Z[G]$ corresponding to $\p \in S$ to $\alpha \p(L)$.

\subsection{Class field theory and diagrams}

Denote the idele class-group of $L$ by $C_L$. We let
\[ 0 \ra C_L \ra \calV \ra \aug{G} \ra 0 \]
be the extension corresponding under the isomorphism of Lemma \ref{h2 as ext} to the global fundamental class in $H^2(G,C_L)$. In fact, for concreteness, we will take the following description of $\calV$: Suppose the element of $H^1(G,\Hom_\Z(\aug{G},C_L))$ corresponding to the global fundamental class is represented by the $1$-cocycle $f$. Then we view $\calV$ as $C_L \oplus \aug{G}$ (direct sum as $\Z$-modules) with $G$-action given as in (\ref{G-action for h1}).

Ritter and Weiss also define local versions $V_\p$ of $\calV$ for each $\p \in S'$. For $\p \in S$, $V_\p$ is simply the analogous construction to $\calV$ with $C_L$ replaced by $L_\p\st$ and the global fundamental class replaced by the local one. For $\p \in S' \setm S$, the definition is more subtle, but still uses local class field-theoretic data. We will only need the definitions in a very simple situation, which we will turn to in Section \ref{sec: delta2}. The reader wishing to see the complete definition may refer to \cite[Sections 1,3]{rw:tateforglobal}. Following Ritter and Weiss, we set
\begin{equation} \label{vs' def}
V_{S'} = \left(\bigoplus_{\p \in S'} \Z[G] \teno{\Z[G_\p]} V_\p\right) \oplus \left(\bigoplus_{\pp \not\in S_L'} U_\pp\right) ,
\end{equation}
where $U_\pp$ is the group of units in $L_\pp$. Note that the first direct sum runs through primes $\p$ of $K$ (in $S'$), whereas the second direct sum runs through primes $\pp$ of $L$ (not above $S'$).

The important diagrams involving the modules $W_{S'}$, $R_{S'}$ and $B_{S'}$ are 
\begin{equation} \label{big diagram}
\xymatrix@R=10pt{
& & 0 \ar[dd] & 0 \ar[dd] & 0 \ar[dd] & & \\
& & & & & & \\
& 0 \ar[r] & \oh_{L,S}\st \ar[r] \ar[dd] & A \ar[r] \ar[dd] & R_{S'} \ar[dd] \ar `r[rrd]^s `[ddd] `[llllddddd] `[dddddd] [lldddddd] & & \\
& & & & & & \\
& 0 \ar[r] & J_S \ar[r] \ar[dd] & V_{S'} \ar[r] \ar[dd] & W_{S'} \ar[r] \ar[dd] & 0 & \\
& & & & & & \\
& 0 \ar[r] & C_L \ar[r] \ar[dd] & \mathcal{V} \ar[r] \ar[dd] & \aug{G} \ar[r] \ar[dd] & 0 & \\
& & & & & & \\
& & \class_S(L) \ar[r] \ar[dd] & 0 & 0 & & \\
& & & & & & \\
& & 0 & & & &
}
\end{equation}
and
\begin{equation} \label{RBX diagram}
\xymatrix{
0 \ar[r] & R_{S'} \ar[r] \ar[d]^s & B_{S'} \ar[r] \ar[d]^t & X \ar[r] \ar[d]^1 & 0 \\
0 \ar[r] & \class_S(L) \ar[r] & \nabla \ar[r] & X \ar[r] & 0 .
}
\end{equation}
In (\ref{big diagram}), the module $A$ is defined to be the kernel of $V_{S'} \ra \mathcal{V}$, and (\ref{big diagram}) is simply the snake diagram arising from the middle two rows. The surjectivity of $V_{S'} \ra \calV$ in (\ref{big diagram}) and the origin of (\ref{RBX diagram}) are treated in \cite[Section 4]{rw:tateforglobal}.

$R_{S'}$ fits into an exact sequence
\begin{equation} \label{pretate}
0 \ra \oh_{L,S}\st \ra A \ra R_{S'} \stackrel{s}{\ra} \class_S(L) \ra 0 ,
\end{equation}
where $A$ is the (cohomologically trivial) $\Z[G]$-module appearing in the Tate sequence (\ref{tate sequence}). Ritter and Weiss term the map $s$ the `snake map', because it is the snake map of diagram (\ref{big diagram}) -- see the discussion following \cite[Theorem 1]{rw:tateforglobal}. The significance of the snake map is as follows: Applying $\Hom_\Z(-,\class_S(L))$ to the short exact sequence
\begin{equation} \label{rbx}
0 \ra R_{S'} \ra B_{S'} \ra X \ra 0
\end{equation}
occurring in (\ref{RBX diagram}), and remembering that $X$ is $\Z$-free, we obtain the exact sequence
\[ 0 \ra \Hom_\Z(X,\class_S(L)) \ra \Hom_\Z(B_{S'},\class_S(L)) \ra \Hom_\Z(R_{S'},\class_S(L)) \ra 0 .\]
Identifying $\Ext_{\Z[G]}^1(X,\class_S(L))$ with $H^1(G,\Hom_\Z(X,\class_S(L)))$, the extension class of (\ref{nabla extension}) is the image of the class of $-s$ under the connecting homomorphism
\[ H^0(G,\Hom_\Z(R_{S'},\class_S(L))) \ra H^1(G,\Hom_\Z(X,\class_S(L))) .\]
Since (\ref{nabla extension}) is central to our purpose, getting a strong hold on $s$ will be one of our main goals, and will be carried out in Section \ref{sec: image of snake}.

\section{The snake map} \label{sec: image of snake}

\subsection{Explicit description of $s$} \label{sec: desc s}

In this section, $L/K$ is a Galois extension of number fields and $S$ a finite set of places of $K$ containing the infinite and ramified ones. We let $G^S = \gal{L_S/K}$.

We now come to the description of the snake map $s : R_{S'} \ra \class_S(L)$ given in \cite[Section 5]{rw:tateforglobal}. It is defined first of all as a prime-by-prime map $W_{S'} \ra \calH$ where
\[ \calH = \frac{\aug{G^S}}{\Ker(\aug{G^S} \ra \aug{G}) \aug{G^S}} .\]
(We note that an element $\sigma$ of $G$ acts on $\calH$ by left multiplication by any preimage of $\sigma$ in $G^S$.)

For $\p \in S$, the map $\raug{\p}{G} \ra \calH$ sends $\beta \alpha$ to $\beta(\rc{\iota_\p(\alpha)})$ for $\alpha \in \aug{G_\p}$ and $\beta \in \Z[G]$, where the bar denotes class in $\calH$.

Recall the element $\br{\frob}_\q \in G^S = \gal{L_S/K}$ defined in Section \ref{notation}. For $\q \in S' \setm S$, the image of an element $\beta$ inside the copy of $\Z[G]$ in $W_{S'}$ corresponding to $\q$ is mapped to $\beta(\rc{\br{\frob}_\q})$ in $\calH$.

There is an embedding $\gal{L_S/L} \ra \calH$ which sends $\sigma$ to the class of $\sigma - 1$, and the restriction of the map $W_{S'} \ra \calH$ to $R_{S'}$ has its image in $\im(\gal{L_S/L} \ra \calH)$. Identifying $\gal{L_S/L}$ with $\class_S(L)$, we have thus described the snake map $s : R_{S'} \ra \class_S(L)$.

\subsection{The extension class of $\nabla$} \label{sec: ext of nabla}

We now assume that there exists a finite place $\p_0 \in S$ such that $G_{\p_0} = G$. For each $\p \in S$, fix a set $D_\p$ of representatives for $(G/G_\p)\Left$ containing $1$, and given $\sigma \in G$ let $\rho_\p(\sigma)$ be the chosen representative of the coset $\sigma G_\p$. It is possible to choose $S'$ as in Section \ref{modules} with the further condition that every place $\q \in S' \setm S$ splits completely in $L/K$. For this, we are already using that $\bigcup_{\p \in S} G_\p = G$. We are assuming more, of course: $G_{\p_0} = G$. With $S'$ chosen in this way, we have 
\begin{equation} \label{nice R}
R_{S'} = \Ker\left(\bigoplus_{\p \in S} \raug{\p}{G} \ra \aug{G}\right) \oplus \bigoplus_{\q \in S' \setm S} \Z[G] .
\end{equation}

\begin{definition}
Given $\p \in S \setm \{\p_0\}$, $\sigma \in G$ and $\tau \in D_\p$, let $r_{\sigma,\tau}^{(\p)}$ be the element of $R_{S'} = \Ker(\bigoplus_{\p \in S} \raug{\p}{G} \ra \aug{G}) \oplus \bigoplus_{\q \in S' \setm S} \Z[G]$ which has $\sigma \rho_\p(\sigma^{-1} \tau) - \tau$ in the $\p$-component, $\tau - \sigma \rho_\p(\sigma^{-1} \tau)$ in the $\p_0$-component, and zero elsewhere.
\end{definition}

\begin{lemma} \label{image of snake}
Define $g : G \ra \Hom_\Z(X,\class_S(L))$ by sending $\sigma \in G$ to the map
\[ \tau \p(L) - \p_0(L) \mapsto s(r_{\sigma,\tau}^{(\p)}) \]
for $\tau \in D_\p$, $\p \in S \setm \{\p_0\}$. Then the image of $-\rc{s} \in H^0(G,\Hom_\Z(R_{S'},\class_S(L)))$ in $H^1(G,\Hom_\Z(X,\class_S(L)))$ is $\rc{g}$.
\end{lemma}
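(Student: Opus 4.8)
The plan is to unwind the two connecting homomorphisms that produce the extension class of $(\ref{nabla extension})$ from $-\rc{s}$, and to verify that the explicit cocycle $g$ agrees with the result up to the coboundary that is invisible in cohomology. Recall that by the discussion following $(\ref{rbx})$, the extension class of $(\ref{nabla extension})$ is the image of $\rc{-s} \in H^0(G,\Hom_\Z(R_{S'},\class_S(L)))$ under the connecting map $H^0(G,\Hom_\Z(R_{S'},\class_S(L))) \ra H^1(G,\Hom_\Z(X,\class_S(L)))$ attached to $0 \ra \Hom_\Z(X,\class_S(L)) \ra \Hom_\Z(B_{S'},\class_S(L)) \ra \Hom_\Z(R_{S'},\class_S(L)) \ra 0$. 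So the task is purely to compute that connecting homomorphism on the class of $-s$.

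First I would choose, once and for all, a $\Z$-linear section $\pi : R_{S'} \ra B_{S'}$ of the inclusion $R_{S'} \hookrightarrow B_{S'}$; dually this gives a lift of $-s$ to an element $\widehat{-s} \in \Hom_\Z(B_{S'},\class_S(L))$, namely $\widehat{-s} = (-s) \circ (\text{the retraction } B_{S'} \ra R_{S'})$ determined by $\pi$. The recipe for the connecting homomorphism then says: for $\sigma \in G$, the difference $\sigma \cdot \widehat{-s} - \widehat{-s}$ vanishes on $R_{S'}$, hence factors through $X = B_{S'}/R_{S'}$, and the resulting assignment $\sigma \mapsto (\sigma \cdot \widehat{-s} - \widehat{-s}) \in \Hom_\Z(X,\class_S(L))$ is the sought $1$-cocycle. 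The concrete content of the lemma is that this cocycle is cohomologous to $g$. Here the hypothesis $G_{\p_0} = G$ is what makes $(\ref{nice R})$ and the companion decomposition of $B_{S'}$ manageable: one gets $B_{S'} = \Ker(\bigoplus_{\p \in S}\Z[G] \ra \Z[G]) \oplus \bigoplus_{\q \in S'\setm S}\Z[G]$, and the composite $B_{S'} \ra X$ sends the $\p$-component generator $1_\p$ to $\p(L)$ (with the $\p_0$-component absorbing the relation, and the split primes mapping to $0$). So $X$ has the $\Z$-basis $\{\tau\p(L) - \p_0(L) : \p \in S\setm\{\p_0\},\ \tau \in D_\p\}$ already used to define $g$, and I can read off an explicit $\Z$-splitting of $B_{S'} \ra X$ by sending $\tau\p(L) - \p_0(L)$ to $\tau 1_\p - \tau 1_{\p_0} \in B_{S'}$.

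Next I would feed this splitting into the formula above. Write $x_{\tau}^{(\p)} := \tau 1_\p - \tau 1_{\p_0} \in B_{S'}$ for the chosen lift of $\tau\p(L)-\p_0(L)$. The element $\sigma x_\tau^{(\p)} = \sigma\tau 1_\p - \sigma\tau 1_{\p_0}$ lifts $\sigma\tau\p(L) - \p_0(L) = \rho_\p(\sigma\tau) \cdot (\text{something}) + \ldots$; more precisely I would rewrite $\sigma x_\tau^{(\p)}$ modulo $R_{S'}$ in terms of the basis elements $x_{\rho_\p(\sigma\tau')}^{(\p)}$ — but the cleanest route is to observe that $\sigma x_\tau^{(\p)}$ and the lift of $\sigma(\tau\p(L)-\p_0(L))$ via the splitting differ by an element of $R_{S'}$, and that element is (up to sign and reindexing by $\sigma \leftrightarrow \sigma^{-1}$, $\tau \leftrightarrow \rho_\p(\sigma^{-1}\tau)$) exactly $r_{\sigma,\tau}^{(\p)}$ from the Definition preceding the Lemma. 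Applying $-s$ to that difference produces $s(r_{\sigma,\tau}^{(\p)})$, which is precisely the value $g(\sigma)$ takes on $\tau\p(L)-\p_0(L)$. Carrying this out on every basis element shows the connecting-homomorphism cocycle equals $g$ exactly, or at worst differs from it by a coboundary coming from any discrepancy between "$\sigma\cdot(\text{lift})$" and "lift of $\sigma\cdot(-)$" that is itself $G$-equivariantly a coboundary.

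The main obstacle I anticipate is bookkeeping, not conceptual: matching the indexing conventions so that the element of $R_{S'}$ appearing as the "error term" $\sigma x_\tau^{(\p)} - (\text{lift of } \sigma(\tau\p(L)-\p_0(L)))$ is literally $\pm r_{\sigma,\tau}^{(\p)}$ (note the definition of $r_{\sigma,\tau}^{(\p)}$ uses $\sigma\rho_\p(\sigma^{-1}\tau) - \tau$, so a change of variables is needed), and keeping careful track of the sign coming from the $-s$ versus $s$ and from the direction of the cocycle condition. A secondary point to check is that the $\p_0$-component genuinely behaves as the "sink": because $G_{\p_0} = G$, the $\p_0$-slot of $W_{S'}$ is $\raug{\p_0}{G} = \aug{G}$, which is exactly what lets $r_{\sigma,\tau}^{(\p)}$ live in $R_{S'} = \Ker(\bigoplus_{\p\in S}\raug{\p}{G} \ra \aug{G})$ — one must confirm $(\sigma\rho_\p(\sigma^{-1}\tau)-\tau) + (\tau - \sigma\rho_\p(\sigma^{-1}\tau)) = 0$ maps to $0$ in $\aug{G}$, which is immediate, and that the $\p$-component $\sigma\rho_\p(\sigma^{-1}\tau)-\tau$ indeed lies in $\raug{\p}{G}$, i.e. has trivial image under $\Z[G] \ra \Z[G/G_\p]$, which holds since $\sigma\rho_\p(\sigma^{-1}\tau)$ and $\tau$ represent the same coset $\tau G_\p$. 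Once these identifications are nailed down, the equality $(\text{connecting cocycle}) = \rc{g}$ in $H^1$ drops out.
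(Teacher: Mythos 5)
Your proposal is correct and follows essentially the same route as the paper: lift $-s$ to $\Hom_\Z(B_{S'},\class_S(L))$ via the $\Z$-splitting $B_{S'}=R_{S'}\oplus Z$ with $Z$ spanned by the lifts $\tau e_\p-\tau e_{\p_0}$, evaluate $\sigma\mapsto\sigma\mu-\mu$ on these basis lifts, and identify the $R_{S'}$-error term with $\pm\sigma^{-1}r_{\sigma,\tau}^{(\p)}$. Once you carry out the change of variables you flag, the cocycle equals $g$ on the nose (the paper gets exact equality, so no residual coboundary needs to be absorbed).
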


\begin{proof}
We follow the diagram
\begin{equation} \label{conn calc}
\xymatrix{
& \map(G,\Hom_\Z(X,\class_S(L))) \ar[d] \\
\Hom_\Z(B_{S'},\class_S(L)) \ar[r] \ar[d] & \map(G,\Hom_\Z(B_{S'},\class_S(L))) \\
\Hom_\Z(R_{S'},\class_S(L)) & 
}
\end{equation}
from bottom-left to top-right. Note that
\[ B_{S'} = \Ker\left(\bigoplus_{\p \in S} \Z[G] \ra \Z[G] \right) \oplus \bigoplus_{\q \in S' \setm S} \Z[G] .\]

We first look for a $\Z$-splitting of $0 \ra R_{S'} \ra B_{S'} \ra X \ra 0$. We use the $\Z$-basis
\[ \bigcup_{\p \in S \setm \{\p_0\}} \{\tau \p(L) - \p_0(L) \sat \tau \in D_\p\} \]
of $X$. For $\p \in S$, let $e_\p$ be the element of $\bigoplus_{\p \in S} \Z[G]$ which has $1$ in the $\p$-component and zero everywhere else. Then a lift of $\tau \p(L) - \p_0(L)$ to $B_{S'}$ is $\tau e_\p - \tau e_{\p_0}$. Therefore if $Z$ is the $\Z$-span of these lifts,
\[ B_{S'} = R_{S'} \oplus Z .\]

Define $\mu : B_{S'} \ra \class_S(L)$ by $\mu|_{R_{S'}} = -s$ and $\mu|_Z = 0$. Let $f$ be the image of $\mu$ under the horizontal map in (\ref{conn calc}), \ie if $\sigma \in G$, $f(\sigma) = \sigma \mu - \mu$. Now take $\tau \in D_\p$ for some $\p \in S$.
\[ (\sigma \mu)(\tau e_\p - \tau e_{\p_0}) = \sigma \mu(\sigma^{-1} \tau e_\p - \sigma^{-1} \tau e_{\p_0}) ,\]
and one sees that $\sigma^{-1} \tau e_\p - \sigma^{-1} \tau e_{\p_0}$ decomposes as
\begin{eqnarray}
& & [(\sigma^{-1} \tau - \rho_\p(\sigma^{-1} \tau)) e_\p + (\rho_\p(\sigma^{-1} \tau) - \sigma^{-1} \tau) e_{\p_0}] \nonumber \\
&+& [\rho_\p(\sigma^{-1} \tau) e_\p - \rho_\p(\sigma^{-1} \tau) e_{\p_0}] \label{conn calc eqn}
\end{eqnarray}
in $R_{S'} \oplus Z$. Also, we recognize the first bracketed element in (\ref{conn calc eqn}) as $-\sigma^{-1} r_{\sigma,\tau}^{(\p)}$. Thus
\begin{eqnarray*}
(\sigma \mu)(\tau e_\p - \tau e_{\p_0}) &=& -\sigma \mu(\sigma^{-1} r_{\sigma,\tau}^{(\p)}) \\
&=& \sigma s(\sigma^{-1} r_{\sigma,\tau}^{(\p)}) \\
&=& s(r_{\sigma,\tau}^{(\p)}) .
\end{eqnarray*}
Since $\mu(\tau e_\p - \tau e_{\p_0}) = 0$, we therefore obtain that
\[ f(\sigma)(\tau e_\p - \tau e_{\p_0}) = s(r_{\sigma,\tau}^{(\p)}) .\]
Further, the map $g : G \ra \Hom_\Z(X,\class_S(L))$ appearing in the statement of the lemma has image $f$ in $\map(G,\Hom_\Z(B_{S'},\class_S(L)))$. The lemma is proven.
\end{proof}

\begin{definition} \label{def frak c}
If $\p \in S \setm \{\p_0\}$ and $\tau \in G_\p$, then $\iota_{\p_0}(\tau)^{-1} \iota_\p(\tau) \in \gal{L_S/L}$, and we let its image in $\class_S(L)$ under the Artin map be $\frakc_\p(\tau)$.
\end{definition}

\begin{lemma} \label{snake of r}
If $\p \in S \setm \{\p_0\}$, $\tau \in D_\p$ and $\sigma \in G$, then $s(r_{\sigma,\tau}^{(\p)}) = \tau \frakc_\p(\tau^{-1} \sigma \rho_\p(\sigma^{-1} \tau))$.
\end{lemma}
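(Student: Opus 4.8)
The plan is to evaluate $s$ on $r_{\sigma,\tau}^{(\p)}$ directly, using the prime-by-prime description of the snake map recalled in Section~\ref{sec: desc s}. Set $g = \tau^{-1}\sigma\rho_\p(\sigma^{-1}\tau)$. Since $\rho_\p(\sigma^{-1}\tau)$ represents the left coset $(\sigma^{-1}\tau)G_\p$, we have $g \in G_\p$ and $\sigma\rho_\p(\sigma^{-1}\tau) = \tau g$. Hence the $\p$-component of $r_{\sigma,\tau}^{(\p)}$ is $\tau g - \tau = \tau(g-1)$, its $\p_0$-component is $\tau - \tau g = \tau(1-g)$, which lies in $\raug{\p_0}{G} = \aug{G}$ because $G_{\p_0} = G$, and all remaining components vanish.

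Next I would apply the map $W_{S'} \to \calH$ componentwise. Writing $\tau(g-1) = \tau\cdot(g-1)$ with $g-1 \in \aug{G_\p}$ and $\tau(1-g) = \tau\cdot(1-g)$ with $1-g \in \aug{G_{\p_0}}$, the prime-by-prime rules send these to $\tau\cdot\rc{\iota_\p(g)-1}$ and $\tau\cdot\rc{1-\iota_{\p_0}(g)}$ respectively. Adding, the image of $r_{\sigma,\tau}^{(\p)}$ in $\calH$ is $\tau\cdot\rc{\iota_\p(g) - \iota_{\p_0}(g)}$, where $\tau \in G$ acts, as always, by left multiplication by a chosen lift $\tilde\tau \in G^S$.

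The crux is to recognize this element of $\calH$ as the image of an element of $\gal{L_S/L}$ under the embedding $x \mapsto \rc{x-1}$. Since $\iota_\p(g)$ and $\iota_{\p_0}(g)$ both restrict to $g$ on $L$, the element $\iota_{\p_0}(g)^{-1}\iota_\p(g)$ lies in $\gal{L_S/L}$, and $\iota_\p(g) - \iota_{\p_0}(g) = \iota_{\p_0}(g)\bigl(\iota_{\p_0}(g)^{-1}\iota_\p(g) - 1\bigr)$. Using that the relation submodule $\Ker(\aug{G^S}\to\aug{G})\,\aug{G^S}$ defining $\calH$ contains $\aug{\gal{L_S/L}}\cdot\aug{G^S}$ — so that, modulo relations, one may absorb trailing group-element factors and turn left multiplication by $\tilde\tau$ into conjugation by $\tilde\tau$ (as in $\tilde\tau(h-1) = (\tilde\tau h\tilde\tau^{-1}-1)\tilde\tau \equiv \tilde\tau h\tilde\tau^{-1}-1$) — one computes that $\tau\cdot\rc{\iota_\p(g)-\iota_{\p_0}(g)}$ equals the class of $\tilde\tau\,\iota_{\p_0}(g)^{-1}\iota_\p(g)\,\tilde\tau^{-1} - 1$. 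Transporting along the Artin isomorphism $\gal{L_S/L} \can \class_S(L)$, under which conjugation by $\tilde\tau$ corresponds to the action of $\tau$ on $\class_S(L)$, and invoking Definition~\ref{def frak c}, this class is exactly $\tau\frakc_\p(g) = \tau\frakc_\p(\tau^{-1}\sigma\rho_\p(\sigma^{-1}\tau))$, as claimed.

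The step I expect to require the most care is the reduction inside $\calH$: one must track precisely how the left $G$-action (multiplication by lifts) interacts with the relation submodule — in particular which products of the form (group element)$\cdot$(element of $\aug{\gal{L_S/L}}$) may be simplified, and from which side — so as to land on the element $\iota_{\p_0}(g)^{-1}\iota_\p(g)$ itself rather than a Galois translate of it, and hence on the precise formula stated.
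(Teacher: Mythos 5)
Your proposal is correct and takes essentially the same route as the paper's proof: the same computation of the components of $r_{\sigma,\tau}^{(\p)}$, the same prime-by-prime evaluation in $\calH$, and the same key identification of a class of the form $\rc{\sigma_1 - \sigma_2}$ (with $\sigma_1,\sigma_2$ lifts differing by an element of $\gal{L_S/L}$) with the image of that element under $\gal{L_S/L} \ra \calH$. The only difference is that the paper first invokes the $\Z[G]$-equivariance of $s$ to reduce the claim to computing $s(\tau^{-1} r_{\sigma,\tau}^{(\p)})$, so the interaction of the $G$-action on $\calH$ with the embedding of $\gal{L_S/L}$ and with the Artin isomorphism --- precisely the bookkeeping you flag as the delicate step --- never needs to be tracked.
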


\begin{proof}
Since $s$ is a $\Z[G]$-module homomorphism, the statement of the lemma is equivalent to $s(\tau^{-1} r_{\sigma,\tau}^{(\p)}) = \frakc_\p(\tau^{-1} \sigma \rho_\p(\sigma^{-1} \tau))$. Now, recalling the notation $e_\p$ introduced in the proof of Lemma \ref{image of snake},
\[ \tau^{-1} r_{\sigma,\tau}^{(\p)} = (\tau^{-1} \sigma \rho_\p(\sigma^{-1} \tau) - 1)e_\p - (\tau^{-1} \sigma \rho_\p(\sigma^{-1} \tau) - 1)e_{\p_0} ,\]
whose image in $\calH$ is
\[ \rc{\iota_\p(\tau^{-1} \sigma \rho_\p(\sigma^{-1} \tau)) - 1} - \rc{\iota_{\p_0}(\tau^{-1} \sigma \rho_\p(\sigma^{-1} \tau)) - 1} \]
by the discussion of Section \ref{sec: desc s}. However, this is simply
\begin{equation} \label{snake of r eqn}
\rc{\iota_\p(\tau^{-1} \sigma \rho_\p(\sigma^{-1} \tau)) - \iota_{\p_0}(\tau^{-1} \sigma \rho_\p(\sigma^{-1} \tau))} .
\end{equation}
Now, one checks that if $\sigma_1,\sigma_2 \in G^S$ and $\sigma_2^{-1} \sigma_1 \in \gal{L_S/L}$, then $\sigma_2^{-1} \sigma_1$ has image $\rc{\sigma_1 - \sigma_2}$ in $\calH$. Therefore the element of (\ref{snake of r eqn}) is the image of
\[ \iota_{\p_0}(\tau^{-1} \sigma \rho_\p(\sigma^{-1} \tau))^{-1} \iota_\p(\tau^{-1} \sigma \rho_\p(\sigma^{-1} \tau)) \]
under $\gal{L_S/L} \ra \calH$. Thus $s(\tau^{-1} r_{\sigma,\tau}^{(\p)}) = \frakc_\p(\tau^{-1} \sigma \rho_\p(\sigma^{-1} \tau))$ as desired.
\end{proof}

For each $\q \in S' \setm S$, let $s_\q : \Z[G] \ra \class_S(L)$ be the restriction of the snake map to the copy of $\Z[G]$ in $R_{S'}$ corresponding to $\q$.

\begin{lemma} \label{snake of 1q}
$s_\q(1) = \rc{\q(L) \oh_{L,S}}$, where $\q(L)$ is the distinguished prime of $L$ above $\q$.
\end{lemma}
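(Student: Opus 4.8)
The plan is to unwind the definitions of the snake map and the module $R_{S'}$ at the copy of $\Z[G]$ corresponding to $\q \in S' \setm S$, and then invoke the description of the Artin map. Recall from Section~\ref{sec: desc s} that the snake map is obtained as the restriction to $R_{S'}$ of the prime-by-prime map $W_{S'} \ra \calH$, and that on the copy of $\Z[G]$ corresponding to $\q$, this prime-by-prime map sends $\beta$ to $\beta(\rc{\br\frob_\q})$. Since $1 \in \Z[G]$ sitting in the $\q$-component of $W_{S'}$ is carried into $R_{S'}$ (because, by the choice of $S'$ in Section~\ref{sec: ext of nabla}, $\q$ splits completely in $L/K$, so $\frob_\q - 1$ acts trivially, i.e. the copy of $\Z[G]$ maps into $R_{S'}$ as is), the image $s_\q(1)$ is the class $\rc{\br\frob_\q} \in \calH$, which by the discussion in Section~\ref{sec: desc s} lies in the image of $\gal{L_S/L} \ra \calH$.

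Next I would identify which element of $\gal{L_S/L}$ maps to $\rc{\br\frob_\q}$. Here I would use the observation recorded in the proof of Lemma~\ref{snake of r}: if $\sigma_1, \sigma_2 \in G^S$ with $\sigma_2^{-1}\sigma_1 \in \gal{L_S/L}$, then $\sigma_2^{-1}\sigma_1$ has image $\rc{\sigma_1 - \sigma_2}$ in $\calH$. Taking $\sigma_1 = \br\frob_\q$ (a lift to $G^S$ of the Frobenius) and $\sigma_2 = 1$ — legitimate precisely because $\q$ splits completely in $L/K$, so the image of $\br\frob_\q$ in $G$ is trivial and hence $\br\frob_\q \in \gal{L_S/L}$ — we get that $\rc{\br\frob_\q} = \rc{\br\frob_\q - 1}$ is the image of $\br\frob_\q$ itself under $\gal{L_S/L} \ra \calH$. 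So $s_\q(1)$, viewed in $\gal{L_S/L}$, is $\br\frob_\q$.

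Finally I would translate $\br\frob_\q \in \gal{L_S/L} \iso \class_S(L)$ into ideal-class language. Since $\gal{L_S/L}$ is identified with $\class_S(L)$ via the Artin map, and $L_S/L$ is unramified, the Frobenius at the distinguished prime $\q(L)$ of $L$ above $\q$ corresponds under the Artin map exactly to the class of the ideal $\q(L)$ in $\class_S(L)$, i.e. to $\rc{\q(L)\oh_{L,S}}$. The point to be careful about is compatibility of normalizations: one must check that $\br\frob_\q$, defined in Section~\ref{notation} as the image in $\gal{L_S/K}$ of a lift $\tilde\frob_\q$ of the Frobenius of $K_\q\ur/K_\q$, does land in $\gal{L_S/L}$ (it does, because $\q$ splits completely in $L/K$, so the residue extension at $\q(L)$ over $\q$ is trivial and $\tilde\frob_\q$ restricts to the identity on $L$) and moreover restricts to the Frobenius of $(L_S)_{\q(L_S)}/L_{\q(L)}$ — this is immediate from transitivity of Frobenius in towers. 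Granting this, the Artin reciprocity law gives $\br\frob_\q = \rc{\q(L)\oh_{L,S}}$ in $\class_S(L)$, which is the assertion.

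The main obstacle is purely bookkeeping: making sure the several identifications — $W_{S'} \supseteq$ (copy of $\Z[G]$ at $\q$) $\hookrightarrow R_{S'}$, the prime-by-prime formula $\beta \mapsto \beta(\rc{\br\frob_\q})$, the embedding $\gal{L_S/L} \hookrightarrow \calH$ via $\sigma \mapsto \rc{\sigma - 1}$, and the Artin isomorphism $\gal{L_S/L} \iso \class_S(L)$ — are all composed with the correct variance, and that the hypothesis ``$\q$ splits completely in $L/K$'' (available here by the choice of $S'$ in Section~\ref{sec: ext of nabla}) is used in the two places it is genuinely needed: to place the generator $1$ of the $\q$-copy inside $R_{S'}$, and to ensure $\br\frob_\q \in \gal{L_S/L}$. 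Once these are pinned down, there is no real computation left.
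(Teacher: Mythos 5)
Your argument is correct and coincides with the paper's own proof: both unwind the prime-by-prime formula at $\q$ (using that the full $\Z[G]$-copy lies in $R_{S'}$ since $\q$ splits completely), identify $\rc{\br{\frob}_\q - 1}$ as the image of $\br{\frob}_\q$ under $\gal{L_S/L} \ra \calH$, and conclude via the Artin map that $\br{\frob}_\q$ is the Frobenius at $\q(L)$ in $L_S/L$, hence corresponds to $\rc{\q(L)\oh_{L,S}}$. Your extra remarks on normalizations and on why $\br{\frob}_\q$ restricts to the identity on $L$ are just slightly more explicit versions of the paper's observations, so nothing further is needed.
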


\begin{proof}
The map $\Z[G] \ra R_{S'} \ra \calH$ corresponding to $\q$ sends $1$ to $\rc{\br{\frob}_\q - 1}$ (recall the definition of $\br{\frob}_\p$ in Section \ref{notation}), which is the image of $\br{\frob}_\q$ under $\gal{L_S/L} \ra \calH$. Note that $\br{\frob}_\q$ is indeed in $\gal{L_S/L}$ because $K_\q = L_\q$. We observe also that $\br{\frob}_\q$ is, in this case, just the Frobenius in $L_S/L$ associated to $\q(L)$, and therefore the image of $\rc{\q(L) \oh_{L,S}}$ under the Artin map $\class_S(L) \ra \gal{L_S/L}$ is $\br{\frob}_\q$, completing the proof.
\end{proof}

\section{The map $H^{-2}(G,X) \ra H^{-1}(G,\class_S(L))$} \label{X Cl map}

As in Section \ref{sec: ext of nabla}, we assume that $S$ contains the infinite and ramified places and that there exists a finite place $\p_0 \in S$ such that $G_{\p_0} = G$. We will assume this for the remainder of the article.

\begin{lemma} \label{gens of homology of X}
$H^{-2}(G,X)$ is generated over $\Z$ by
\[ \{\rc{[\tau] \ten (\p(L) - \p_0(L))} \sat \p \in S \setm\{\p_0\}, \tau \in G_\p\} .\]
\end{lemma}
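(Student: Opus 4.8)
The plan is to use the explicit homology model for $H^{-2}(G,X)=H_1(G,X)$ coming from the resolution $P\chdot$, compute it via a presentation of $X$, and then pin down a generating set. First I would recall that $X$ sits in the short exact sequence $0\ra X\ra Y\ra\Z\ra 0$, where $Y=\bigoplus_{\p\in S}\Z[G]\teno{\Z[G_\p]}\Z$ is a direct sum of induced (hence cohomologically trivial) modules. The long exact sequence in Tate cohomology then gives an isomorphism $H^{-2}(G,X)\can H^{-3}(G,\Z)=H_2(G,\Z)$ — but more usefully for an \emph{explicit} generating set, I would instead exploit the connecting map the other way, or work directly with the homology of the complex $P\chdot\teno{\Z[G]}X$.

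The cleaner route: since $Y$ is cohomologically trivial, the connecting homomorphism $\partial\colon H^{-2}(G,\Z)\ra H^{-1}(G,X)$ is... no — I want $H^{-2}(G,X)$ itself, so I would use that $H^{-2}(G,Y)=0=H^{-1}(G,Y)$ forces $H^{-2}(G,X)\can H^{-3}(G,\Z)$, which is not directly in terms of the $\p(L)$. So instead I would argue as follows. Decompose $Y=\bigoplus_{\p\in S}Y_\p$ with $Y_\p=\Z[G]\teno{\Z[G_\p]}\Z$, and use the element $\p_0(L)$ (which has trivial $G$-action, since $G_{\p_0}=G$) to split off a copy of $\Z$: writing $Y=\Z\,\p_0(L)\oplus Y'$ where $Y'$ is spanned by the $\p(L)-\p_0(L)$. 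Then $X=\Ker(Y\ra\Z)$, and since the composite $\Z\,\p_0(L)\hookrightarrow Y\ra\Z$ is an isomorphism, $X$ maps isomorphically onto $Y'$ as an abelian group but not as a $G$-module; rather one checks $X\can\bigoplus_{\p\in S\setm\{\p_0\}}\Ker(Y_\p\ra\Z)$ after the change of variables $\pp\mapsto\pp-\p_0(L)$ — concretely $X$ is generated as a $\Z[G]$-module by the elements $\tau\p(L)-\p_0(L)$ for $\p\in S\setm\{\p_0\}$, $\tau\in G$ (equivalently $\tau$ ranging over coset representatives), with relations coming from $G_\p$-invariance of $\p(L)$.

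With such a presentation in hand, the standard description of $H_1$ via the bar resolution gives generators $\rc{[\sigma]\ten x}$ for $x\in X$ in the kernel of $P_0\teno{\Z[G]}X\ra\ldots$, modulo the image of $d_2$; using the module structure of $X$ one reduces a general generator $\rc{[\sigma]\ten x}$ to a $\Z$-combination of the $\rc{[\tau]\ten(\p(L)-\p_0(L))}$ with $\tau\in G_\p$ — the constraint $\tau\in G_\p$ appearing precisely because $\p(L)-\p_0(L)$ is only a cycle representative when paired with elements fixing $\p(L)$, i.e. when the relevant boundary $d_1([\tau]\ten(\p(L)-\p_0(L)))=[\,]\ten(\tau(\p(L)-\p_0(L))-(\p(L)-\p_0(L)))$ vanishes, which for $\tau\in G_\p$ it does since $\tau\p(L)=\p(L)$ and $\tau\p_0(L)=\p_0(L)$. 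The main obstacle I anticipate is bookkeeping: translating the abstract Shapiro/long-exact-sequence isomorphism into the concrete claim that these particular tensors span, and in particular checking that no further relations among them are needed as a \emph{generating} (not basis) statement — so I would be careful to claim only generation over $\Z$, which is all the lemma asserts, and verify it by reducing an arbitrary $1$-cycle of $P\chdot\teno{\Z[G]}X$ using the $\Z[G]$-module generators $\tau\p(L)-\p_0(L)$ of $X$ together with the identity $[\sigma_1\sigma_2]\equiv[\sigma_1]\sigma_2+[\sigma_2]$ modulo boundaries.
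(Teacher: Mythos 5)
Two steps in your plan are incorrect as written, and they are the load-bearing ones. First, $Y=\bigoplus_{\p\in S}\Z[G]\teno{\Z[G_\p]}\Z$ is \emph{not} cohomologically trivial: modules induced from a subgroup $G_\p$ satisfy Shapiro, $H^i(G,Y)\can\bigoplus_{\p\in S}H^i(G_\p,\Z)$, which is nonzero for $i=-2$ (the $\p_0$-summand alone is $G\ab$). So the asserted isomorphism $H^{-2}(G,X)\can H^{-3}(G,\Z)$ is false; e.g.\ for $G$ cyclic nontrivial and every $G_\p=G$ one has $X\can\Z^{|S|-1}$ with trivial action, so $H^{-2}(G,X)\can G^{|S|-1}\neq 0=H_2(G,\Z)$. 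You abandon that route, but the same slip reappears in the route you do take: the claimed decomposition $X\can\bigoplus_{\p\in S\setm\{\p_0\}}\Ker(Y_\p\ra\Z)$ cannot hold, since the right-hand side has $\Z$-rank $\sum_{\p\neq\p_0}([G:G_\p]-1)$ while $X$ has rank $\sum_{\p\neq\p_0}[G:G_\p]$. The statement your generators-and-relations parenthetical is actually reaching for is: because $\p_0(L)$ is fixed by all of $G$, the assignment $\alpha\ten 1\mapsto\alpha(\p(L)-\p_0(L))$ defines a $\Z[G]$-isomorphism $\bigoplus_{\p\in S\setm\{\p_0\}}\Z[G]\teno{\Z[G_\p]}\Z\can X$ (it is well defined since $G_\p$ fixes $\p(L)-\p_0(L)$, and it carries a $\Z$-basis to a $\Z$-basis). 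Second, the concluding reduction is not yet an argument: classes of single cycles $\rc{[\sigma]\ten x}$ do not a priori generate $H_1(G,X)$ (a general $1$-cycle is a sum whose individual terms are not cycles), and reducing an arbitrary cycle with induced coefficients to the cycles $[\tau]\ten(\p(L)-\p_0(L))$, $\tau\in G_\p$, is exactly the explicit form of Shapiro's Lemma in degree $-2$ (Lemma \ref{Hab iso}); you must either invoke that or genuinely carry out the bar-resolution computation your last sentence only gestures at.

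Once these repairs are made, your route is legitimately different from the paper's and in fact shorter. The paper never decomposes $X$: it uses $0\ra X\ra Y\ra\Z\ra 0$ to produce the exact sequence $0\ra H^{-2}(G,X)\ra H^{-2}(G,Y)\ra H^{-2}(G,\Z)\ra 0$ (the hypothesis $G_{\p_0}=G$ giving exactness at both ends), identifies $H^{-2}(G,Y)\can\bigoplus_{\p\in S}G_\p\ab$ by Lemma \ref{Hab iso}, and then exhibits $H^{-2}(G,X)$ as $\Ker(\bigoplus_{\p}G_\p\ab\ra G\ab)$, which is generated by explicit elements mapping to the classes in Lemma \ref{gens of homology of X}. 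Your corrected decomposition instead gives at once $H^{-2}(G,X)\can\bigoplus_{\p\in S\setm\{\p_0\}}G_\p\ab$ with the stated generators, bypassing the long exact sequence and the kernel computation; but note it consumes the same explicit-Shapiro input, so the hand-waved final step has to be filled in on either approach.
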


\begin{proof}
Note that the exact sequence $0 \ra X \ra Y \ra \Z \ra 0$ gives an exact sequence
\begin{equation} \label{X hom gens seq}
0 \ra H^{-2}(G,X) \ra H^{-2}(G,Y) \ra H^{-2}(G,\Z) \ra 0 .
\end{equation}
Indeed, the assumption $G_{\p_0} = G$ implies that $H^{-1}(G,X)$ is zero and that $H^{-3}(G,Y) \ra H^{-3}(G,\Z)$ is surjective. Now, $Y$ is isomorphic to $\bigoplus_{\p \in S} \Z[G] \teno{\Z[G_\p]} \Z$, so by Lemma \ref{Hab iso}, there is an isomorphism
\begin{eqnarray*}
\bigoplus_{\p \in S} G_\p\ab &\ra& H^{-2}(G,Y) \\
(\tau_\p [G_\p,G_\p])_\p &\mapsto& \rc{\sum_{\p \in S} [\tau_\p] \ten \p(L)} .
\end{eqnarray*}

Given $\p \in S \setm \{\p_0\}$ and $\tau \in G_\p$ let $x_{\p}(\tau)$ be the element of $\bigoplus_{\p \in S} G_\p\ab$ with $\tau [G_\p,G_\p]$ in the $\p$-component, $\tau^{-1} [G_{\p_0},G_{\p_0}]$ in the $\p_0$-component and the commutator group $[G_\q,G_\q]$ in the $\q$-component for $\q \in S \setm \{\p_0,\p\}$. Observe then that $\Ker(\bigoplus_{\p \in S} G_\p\ab \ra G\ab)$ is generated by
\[ \{x_\p(\tau) \sat \p \in S \setm \{\p_0\}, \tau \in G_\p\} .\]
This makes use of the fact that $G_{\p_0} = G$. 

Now, $x_\p(\tau)$ maps to
\[ \rc{[\tau] \ten \p(L) - [\tau] \ten \p_0(L)} = \rc{[\tau] \ten (\p(L) - \p_0(L))} ,\]
which lies in the image of the injective map $H^{-2}(G,X) \ra H^{-2}(G,Y)$. Since $\Ker(\bigoplus_{\p \in S} G_\p\ab \ra G\ab)$ is generated by the elements $x_\p(\tau)$ as above, we obtain an injective map
\[ \Ker\left(\bigoplus_{\p \in S} G_\p\ab \ra G\ab\right) \ra H^{-2}(G,X) \]
which sends $x_\p(\tau)$ to $\rc{[\tau] \ten (\p(L) - \p_0(L))}$. Since both sides have the same order (they can be shown to be isomorphic by using the sequence in (\ref{X hom gens seq}) directly, together with Shapiro's Lemma), this map is an isomorphism. This completes our proof.
\end{proof}

\begin{remark}
It is clear from the definition of the elements $x_\p(\tau)$ in the above proof that, in the statement of Lemma \ref{gens of homology of X}, the elements $\tau$ need only be taken from a set of representatives for the left cosets of $[G_\p,G_\p]$ in $G_\p$.
\end{remark}

\begin{lemma} \label{cupping lemma}
Let $A$ be a $\Z[G]$-module and $\xi \in H^1(G,A)$ be represented by the $1$-cocycle $g : G \ra A$. Then the map
\[ H^{-2}(G,X) \ra H^{-1}(G,X \teno{\Z} A) \]
obtained by taking cup-product with $\xi$ sends $\rc{[\tau] \ten (\p(L) - \p_0(L))}$ to $\rc{(\p(L) - \p_0(L)) \ten g(\tau)}$, where $\p \in S \setm\{\p_0\}$ and $\tau \in G_\p$.
\end{lemma}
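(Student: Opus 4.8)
The plan is to compute the cup product explicitly at the level of the chain complexes, using the homological model for Tate cohomology in negative degrees set up in Section~\ref{sec: group coh}. Recall that $H^{-2}(G,X) = H_1(G,X)$ is computed from the complex $P\chdot \teno{\Z[G]} X$, and a class $\rc{[\tau] \ten (\p(L) - \p_0(L))}$ is represented by the $1$-chain $[\tau] \ten (\p(L) - \p_0(L)) \in P_1 \teno{\Z[G]} X$ (which is a cycle precisely because $d_1([\tau]) = \tau - 1$ kills $\p(L) - \p_0(L)$ when $\tau \in G_\p$ — here one uses that $\p(L) - \p_0(L)$, as an element of $X \con Y$, is fixed by $G_\p$ since $G_\p$ fixes the chosen place $\p(L)$; wait, more carefully, $\tau \p(L) = \p(L)$ and $\tau \p_0(L) = \p_0(L)$ as $G_{\p_0}=G \supseteq$, no: $\tau \in G_\p$ fixes $\p(L)$, and $\tau \p_0(L) = \p_0(L)$ because the chosen place above $\p_0$ is $G$-fixed only if $G_{\p_0}=G$, which holds). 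So the cycle condition is exactly $\tau \in G_\p$.

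\textbf{Key steps.} First I would recall the standard formula for the cup product $H^{-2}(G,X) \ten H^1(G,A) \ra H^{-1}(G, X \teno{\Z} A)$ in terms of the chosen resolution, or equivalently invoke Lemma~\ref{conn as cup}'s underlying machinery: in the homological normalization, cupping a $1$-homology class represented by $[\tau] \ten x$ with a cohomology class represented by the $1$-cocycle $g$ yields the $0$-chain-level element $x \ten g(\tau) \in X \teno{\Z} A$, whose class in $H^{-1}(G, X\teno{\Z} A) = H^0(\text{coinvariants-with-norm})$ is $\rc{x \ten g(\tau)}$. Concretely, the cap/cup pairing between $H_1$ and $H^1$ lands in $H_0$ modulo the appropriate norm submodule, and the pairing of the generator $[\tau]$ of $P_1$ against the cocycle $g$ produces $g(\tau)$, tensored along with $x = \p(L) - \p_0(L)$. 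Substituting $x = \p(L)-\p_0(L)$ gives the asserted formula $\rc{(\p(L)-\p_0(L)) \ten g(\tau)}$. I would verify that this element indeed satisfies the $H^{-1}$-cycle condition (its norm vanishes), which follows since the cup product is well-defined on cohomology and $\xi \in H^1$, $\rc{[\tau]\ten x} \in H^{-2}$ are genuine classes.

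\textbf{Main obstacle.} The real work is bookkeeping with sign and normalization conventions: the paper uses a specific boundary map $d_n$ on the right-module bar resolution $P\chdot$, identifies $H^i$ for $i<-1$ with $H_{-i-1}$, and has a particular presentation (\ref{def H-1}) of $H^{-1}$. I would need to trace the cup product through this exact model — either by writing down an explicit diagonal approximation $P\chdot \ra P\chdot \ten P\chdot$ compatible with these conventions, or by citing \cite[Ch. XII]{ce:homalg} (as in Lemma~\ref{conn as cup}) and carefully matching normalizations — to be sure no stray sign appears in front of $g(\tau)$. Once the diagonal map is pinned down, the computation is a one-line substitution; the subtlety is entirely in confirming that the formula comes out clean (no sign, no extra $\tau$-twist) under the author's chosen conventions, which is plausible precisely because the generator $[\tau]$ of $P_1$ pairs with a $1$-cocycle $g$ to give exactly $g(\tau)$ with coefficient $+1$. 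I expect the proof to be short, essentially reducing to this observation together with the explicit generators from Lemma~\ref{gens of homology of X}.
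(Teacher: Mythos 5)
Your plan identifies the right objects (the homological model for $H^{-2}$, the cycle condition $\tau \in G_\p$, the explicit generators), but as written it is not a proof: the single step that carries all the content of the lemma --- the explicit chain-level formula for the cup product $H^{-2}(G,X) \times H^1(G,A) \ra H^{-1}(G,X \teno{\Z} A)$ --- is asserted (``the generator $[\tau]$ pairs with a $1$-cocycle $g$ to give exactly $g(\tau)$ with coefficient $+1$'') rather than derived or cited from a precise source. There is no off-the-shelf published formula for the cup product in this particular pair of negative/positive degrees; the standard explicit formulas (Cartan--Eilenberg, or Serre's \emph{Local Fields}) cover only special combinations such as $H^{-1} \times H^1 \ra H^0$. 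The diagonal approximation whose construction you defer to ``bookkeeping'' is exactly the missing work, and your own ``Main obstacle'' paragraph concedes that you have not pinned down whether a sign or a $\tau$-twist appears --- which is precisely what the lemma claims.

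Moreover, the clean formula is \emph{not} what a direct computation produces on the nose, so the plausibility argument you give is misleading. The paper proves the lemma by dimension shifting: it tensors with the augmentation sequence to replace the target square by $H^{-1}(G,\aug{G} \teno{\Z} X) \ra H^0(G,\aug{G} \teno{\Z} X \teno{\Z} A)$, where the cup product with a degree-one class has a known representative (Serre, \emph{Local Fields}, Appendix to Ch.~XI, Lemma 2), and then computes. The raw output of that computation is $-\rc{(\p(L)-\p_0(L)) \ten g(\tau^{-1})}$; it becomes $\rc{(\p(L)-\p_0(L)) \ten g(\tau)}$ only after invoking the cocycle identity $g(\tau^{-1}) = -\tau^{-1}g(\tau)$ and the fact that, modulo the denominator defining $H^{-1}$ and because $\tau$ fixes $\p(L)-\p_0(L)$, the factor $\tau^{-1}$ may be dropped. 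So the absence of a sign and of a twist is a consequence of these identifications, not a feature of the chain-level pairing itself. To complete your approach you would have to either exhibit a diagonal approximation compatible with the paper's right-module bar resolution and its identification $H^i = H_{-i-1}$, and then perform the analogue of this manipulation, or dimension-shift into a range where an explicit formula is available --- the latter being exactly the route the paper takes.
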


\begin{proof}
We dimension shift using the commutativity of the diagram
\[ \xymatrix{
H^{-2}(G,X) \ar[r]^{c_1} \ar[d]^{\delta_1} & H^{-1}(G,X \teno{\Z} A) \ar[d]^{\delta_2} \\
H^{-1}(G,\aug{G} \teno{\Z} X) \ar[r]^{c_2} & H^0(G,\aug{G} \teno{\Z} X \teno{\Z} A)
} \]
where $c_1$ and $c_2$ are obtained by cupping with $\xi$. Given $\p \in S \setm \{\p_0\}$ and $\tau \in G_\p$, $\delta_1(\rc{[\tau] \ten (\p(L) - \p_0(L))}) = \rc{(\tau - 1) \ten (\p(L) - p_0(L))}$. By \cite[Appendix to Chapter XI, Lemma 2]{serre:localfields}, the image of this under $c_2$ is represented in $H^0(G,\aug{G} \teno{\Z} X \teno{\Z} A)$ by
\begin{eqnarray*}
& & - \sum_{\sigma \in G} \sigma((\tau - 1) \ten (\p(L) - \p_0(L))) \ten g(\sigma) \\
&=& - \sum_{\sigma \in G} (\sigma \tau - 1) \ten (\sigma \p(L) - \p_0(L)) \ten g(\sigma) + \sum_{\sigma \in G} (\sigma - 1) \ten (\sigma \p(L) - \p_0(L)) \ten g(\sigma) \\
&=& - \sum_{\sigma \in G} (\sigma - 1) \ten (\sigma \tau^{-1} \p(L) - \p_0(L)) \ten g(\sigma \tau^{-1}) + \sum_{\sigma \in G} (\sigma - 1) \ten (\sigma \p(L) - \p_0(L)) \ten g(\sigma) \\
&=& - \sum_{\sigma \in G} (\sigma  - 1) \ten (\sigma \p(L) - \p_0(L)) \ten (g(\sigma \tau^{-1}) - g(\sigma)) \\
&=& - \sum_{\sigma \in G} (\sigma - 1) \ten (\sigma \p(L) - \p_0(L)) \ten \sigma g(\tau^{-1}) .
\end{eqnarray*}
The image under $\delta_2^{-1}$ of the class of this is
\begin{eqnarray*}
- \rc{(\p(L) - \p_0(L)) \ten g(\tau^{-1})} &=& \rc{(\p(L) - \p_0(L)) \ten \tau^{-1} g(\tau)} \\
&=& \rc{(\p(L) - \p_0(L)) \ten g(\tau)} .
\end{eqnarray*}
\end{proof}

\begin{prop} \label{image of conn}
Under the above assumptions, the connecting homomorphism $H^{-2}(G,X) \ra H^{-1}(G,\class_S(L))$ sends $\rc{[\tau] \ten (\p(L) - \p_0(L))}$, with $\p \in S \setm \{\p_0\}$ and $\tau \in G_\p$, to $\rc{\frakc_\p(\tau)}$. (See Definition \ref{def frak c} for the definition of $\frakc_\p(\tau)$.)
\end{prop}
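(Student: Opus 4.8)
By Lemma \ref{gens of homology of X} the group $H^{-2}(G,X)$ is generated over $\Z$ by the classes $\rc{[\tau] \ten (\p(L) - \p_0(L))}$ with $\p \in S \setm \{\p_0\}$ and $\tau \in G_\p$, so since the connecting homomorphism is additive it suffices to follow such a generator through it. The plan is simply to assemble the pieces already in place. By Lemma \ref{conn as cup}, the connecting homomorphism $H^{-2}(G,X) \ra H^{-1}(G,\class_S(L))$ attached to (\ref{nabla extension}) is cup-product with the extension class $\xi \in H^1(G,\Hom_\Z(X,\class_S(L)))$ of (\ref{nabla extension}), followed by the evaluation map $H^{-1}(G, X \teno{\Z} \Hom_\Z(X,\class_S(L))) \ra H^{-1}(G,\class_S(L))$; and by Lemma \ref{image of snake} (which already incorporates the sign $-s$ from the discussion preceding (\ref{rbx})) this class is $\xi = \rc{g}$ for the explicit cocycle $g$ appearing there.

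First I would apply Lemma \ref{cupping lemma} with $A = \Hom_\Z(X,\class_S(L))$ and the cocycle $g$, which sends $\rc{[\tau] \ten (\p(L) - \p_0(L))}$ to $\rc{(\p(L) - \p_0(L)) \ten g(\tau)}$; applying the evaluation map then gives $\rc{g(\tau)(\p(L) - \p_0(L))}$. Since $1 \in D_\p$, the vector $\p(L) - \p_0(L) = 1 \cdot \p(L) - \p_0(L)$ is one of the basis elements of $X$ on which $g(\tau)$ is defined directly, so $g(\tau)(\p(L) - \p_0(L)) = s(r_{\tau,1}^{(\p)})$. Finally, Lemma \ref{snake of r}, applied with the $\sigma$ of that lemma equal to our $\tau$ and the $\tau$ of that lemma equal to $1 \in D_\p$, gives $s(r_{\tau,1}^{(\p)}) = \frakc_\p(\tau \rho_\p(\tau^{-1}))$; and since $\tau \in G_\p$ we have $\tau^{-1} G_\p = G_\p$, whose representative in $D_\p$ is $1$ (as $1 \in D_\p \cap G_\p$), so $\rho_\p(\tau^{-1}) = 1$ and $s(r_{\tau,1}^{(\p)}) = \frakc_\p(\tau)$. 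Hence the connecting homomorphism sends $\rc{[\tau] \ten (\p(L) - \p_0(L))}$ to $\rc{\frakc_\p(\tau)}$, as claimed.

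This is essentially a bookkeeping argument chaining Lemmas \ref{conn as cup}, \ref{image of snake}, \ref{cupping lemma} and \ref{snake of r}, so I do not expect a genuine obstacle. The only points needing attention are that the sign already folded into Lemma \ref{image of snake} is carried correctly through the cup-product and evaluation, so that no spurious sign survives, and that $1 \in D_\p$ is used consistently so that $g(\tau)$ really is being evaluated at a basis vector of $X$ (rather than a $\Z$-linear combination, which would force a detour through the general formula of Lemma \ref{snake of r} with a nontrivial $\rho_\p$-term).
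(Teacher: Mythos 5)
Your proposal is correct and follows essentially the same route as the paper: combine Lemma \ref{conn as cup} with Lemma \ref{cupping lemma} for $A = \Hom_\Z(X,\class_S(L))$ and the cocycle $g$ of Lemma \ref{image of snake}, then evaluate $g(\tau)(\p(L)-\p_0(L)) = s(r_{\tau,1}^{(\p)}) = \frakc_\p(\tau)$ via Lemma \ref{snake of r}. Your explicit check that $\rho_\p(\tau^{-1})=1$ (since $1 \in D_\p$ and $\tau \in G_\p$) just spells out a step the paper leaves implicit, and your handling of the sign folded into Lemma \ref{image of snake} matches the paper's.
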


\begin{proof}
We apply Lemma \ref{cupping lemma} in the case $A = \Hom_\Z(X,\class_S(L))$ and $g$ is as in Lemma \ref{image of snake}, and combine this with Lemma \ref{conn as cup}. Given $\p \in S \setm \{\p_0\}$ and $\tau \in G_\p$, the image of $\rc{[\tau] \ten (\p(L) - \p_0(L))}$ in $H^{-1}(G,\class_S(L))$ is represented by $g(\tau)(\p(L) - \p_0(L)) = s(r_{\tau,1}^{(\p)})$. By Lemma \ref{snake of r}, $s(r_{\tau,1}^{(\p)}) = \frakc_\p(\tau)$.
\end{proof}

We can now describe $H^1(G,\oh_{L,S}\st)$ as a quotient of $H^{-1}(G,\class_S(L))$.

\begin{definition} \label{calC def}
Let $\rc{\calC}_S(L/K)$ denote the subgroup of $H^{-1}(G,\class_S(L))$ generated by
\[ \{\rc{\frakc_\p(\tau)} \sat \p \in S \setm \{\p_0\}, \tau \in G_\p \setm \{1\}\} .\]
\end{definition}
(See Definition \ref{def frak c} for the definition of $\frakc_\p(\tau)$.) We remark that the bar in $\rc{\calC}_S(L/K)$ is intended to reflect that we are working in cohomology. A similar object $\calC_S(L/K)$ defined for the class-group in its entirety will be appear in Corollary \ref{kernel of norm}.

\begin{prop} \label{-1,1 iso}
There is a canonical isomorphism
\[ H^{-1}(G,\class_S(L))/\rc{\calC}_S(L/K) \can H^1(G,\oh_{L,S}\st) .\]
\end{prop}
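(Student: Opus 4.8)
The plan is to realize $H^1(G,\oh_{L,S}\st)$ as the target of a canonical dimension-shift isomorphism out of $H^{-1}(G,\nabla)$, and then to use the long exact sequence attached to (\ref{nabla extension}), together with the computations of Sections \ref{sec: image of snake} and \ref{X Cl map}, to identify $H^{-1}(G,\nabla)$ with the quotient $H^{-1}(G,\class_S(L))/\rc{\calC}_S(L/K)$.

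First I would split the Tate sequence (\ref{tate sequence}), $0 \ra \oh_{L,S}\st \ra A \ra B_{S'} \ra \nabla \ra 0$, at the image $J$ of $A \ra B_{S'}$ into the two short exact sequences $0 \ra \oh_{L,S}\st \ra A \ra J \ra 0$ and $0 \ra J \ra B_{S'} \ra \nabla \ra 0$. Since $A$ and $B_{S'}$ are cohomologically trivial (for $B_{S'}$ see Section \ref{def b}), the connecting homomorphisms attached to these sequences are isomorphisms in every Tate degree; in particular one gets a chain of isomorphisms $H^{-1}(G,\nabla) \stackrel{\delta}{\can} H^0(G,J) \stackrel{\delta'}{\can} H^1(G,\oh_{L,S}\st)$. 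The composite $\delta' \of \delta$ is exactly the dimension-shift map alluded to in the introduction, and it is canonical, independent of the auxiliary choices entering the construction of the Tate sequence.

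Next I would feed (\ref{nabla extension}), $0 \ra \class_S(L) \ra \nabla \ra X \ra 0$, into the long exact Tate cohomology sequence, obtaining
\[ H^{-2}(G,X) \stackrel{\partial}{\ra} H^{-1}(G,\class_S(L)) \ra H^{-1}(G,\nabla) \ra H^{-1}(G,X) .\]
Under the standing hypothesis $G_{\p_0} = G$ one has $H^{-1}(G,X) = 0$ (this is recorded in the proof of Lemma \ref{gens of homology of X}), so $H^{-1}(G,\class_S(L)) \ra H^{-1}(G,\nabla)$ is surjective and induces an isomorphism $H^{-1}(G,\class_S(L))/\im(\partial) \can H^{-1}(G,\nabla)$. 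It then remains to identify $\im(\partial)$ with $\rc{\calC}_S(L/K)$. By Lemma \ref{gens of homology of X} the elements $\rc{[\tau] \ten (\p(L) - \p_0(L))}$, with $\p \in S \setm \{\p_0\}$ and $\tau \in G_\p$, generate $H^{-2}(G,X)$ over $\Z$, and by Proposition \ref{image of conn} the map $\partial$ sends each of them to $\rc{\frakc_\p(\tau)}$. Hence $\im(\partial)$ is the $\Z$-span of $\{\rc{\frakc_\p(\tau)} \sat \p \in S \setm \{\p_0\},\ \tau \in G_\p\}$; discarding the terms with $\tau = 1$, which vanish because $\frakc_\p(1) = 1$ in $\class_S(L)$ by Definition \ref{def frak c}, shows $\im(\partial) = \rc{\calC}_S(L/K)$ as defined in Definition \ref{calC def}. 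Composing the two displayed isomorphisms yields the claimed canonical isomorphism.

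The substantive work has, in effect, already been carried out: the explicit form of the snake map in Section \ref{sec: image of snake} and the computation of the connecting homomorphism $H^{-2}(G,X) \ra H^{-1}(G,\class_S(L))$ in Proposition \ref{image of conn}. The only two points here needing genuine care are the vanishing $H^{-1}(G,X) = 0$ (which follows from $G_{\p_0} = G$ via $0 \ra X \ra Y \ra \Z \ra 0$ and Shapiro's Lemma) and the check that the dimension-shift isomorphism $H^{-1}(G,\nabla) \can H^1(G,\oh_{L,S}\st)$ is canonical; the latter is the place where I would be most careful, as it underlies the word ``canonical'' in the statement.
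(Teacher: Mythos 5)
Your proposal is correct and follows essentially the same route as the paper: the paper's proof likewise combines the dimension-shift isomorphism $H^{-1}(G,\nabla) \can H^1(G,\oh_{L,S}\st)$ from the Tate sequence with Proposition \ref{image of conn} (plus Lemma \ref{gens of homology of X}) to identify the kernel of $H^{-1}(G,\class_S(L)) \ra H^{-1}(G,\nabla)$ with $\rc{\calC}_S(L/K)$, reducing everything to the vanishing $H^{-1}(G,X)=0$. The only cosmetic difference is that the paper re-derives $H^{-1}(G,X)=0$ inside the proof from the sequence $H^{-2}(G,Y) \ra H^{-2}(G,\Z) \ra H^{-1}(G,X) \ra 0$, while you cite the same observation already recorded in the proof of Lemma \ref{gens of homology of X}.
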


\begin{proof}
In light of Proposition \ref{image of conn}, and the existence of the canonical isomorphism $H^{-1}(G,\nabla) \can H^1(G,\oh_{L,S}\st)$ arising from the Tate sequence (\ref{tate sequence}), it suffices to show that $H^{-1}(G,X) = 0$. To demonstrate the vanishing of this cohomology group, consider the exact sequence
\[ H^{-2}(G,Y) \ra H^{-2}(G,\Z) \ra H^{-1}(G,X) \ra 0 .\]
Viewing $H^{-2}(G,Y)$ as $\bigoplus_{\p \in S} G_\p\ab$ and $H^{-2}(G,\Z)$ as $G\ab$, we see that the map $H^{-2}(G,Y) \ra H^{-2}(G,\Z)$ is surjective, showing that indeed $H^{-1}(G,X) = 0$. We have used in particular that $G_{\p_0} = G$.
\end{proof}

\section{The map $H^{-1}(G,\class_S(L)) \ra H^1(G,\oh_{L,S}\st)$} \label{Cl U map}

We now wish to make the canonical isomorphism appearing in Proposition \ref{-1,1 iso} explicit. The Tate sequence (\ref{tate sequence}) is obtained by combining the short exact sequence
\begin{equation} \label{left tate}
0 \ra \oh_{L,S}\st \ra A \ra \Ker(s) \ra 0
\end{equation}
arising from (\ref{pretate}) with the short exact sequence
\begin{equation} \label{right tate}
0 \ra \Ker(t) \ra B_{S'} \ra \nabla \ra 0
\end{equation}
arising from the middle vertical map in (\ref{RBX diagram}), observing that $\Ker(s) = \Ker(t)$. Therefore the map $H^i(G,\class_S(L)) \ra H^{i+2}(G,\oh_{L,S}\st)$ factors as
\begin{eqnarray*}
& & H^i(G,\class_S(L)) \ra H^i(G,\nabla) \ra \\
& & H^{i+1}(G,\Ker(t)) \ra H^{i+1}(G,\Ker(s)) \stackrel{\delta_2}{\ra} H^{i+2}(G,\oh_{L,S}\st) ,
\end{eqnarray*}
where the second map is the connecting homomorphism arising from (\ref{right tate}), and the last map, $\delta_2$, is the connecting homomorphism arising from (\ref{left tate}). However, the composition of the first three maps is just the connecting homomorphism $\delta_1 : H^i(G,\class_S(L)) \ra H^{i+1}(G,\Ker(s))$ arising from the exact sequence
\[ 0 \ra \Ker(s) \ra R_{S'} \ra \class_S(L) \ra 0 .\]
In particular, the map $H^{-1}(G,\class_S(L)) \ra H^1(G,\oh_{L,S}\st)$ is the composition
\[ H^{-1}(G,\class_S(L)) \stackrel{\delta_1}{\ra} H^0(G,\Ker(s)) \stackrel{\delta_2}{\ra} H^1(G,\oh_{L,S}\st) .\]

\subsection{The first connecting homomorphism, $\delta_1$} \label{sec: delta1}

By our choice of the set $S'$, $\class_S(L)$ can be generated by the classes of the ideals of $L$ above those in $S' \setm S$. Further, since for $\frakc \in \class_S(L)$ and $\sigma \in G$ we have $\rc{\sigma \frakc} = \rc{\frakc}$ in $H^{-1}(G,\class_S(L))$, an element of this cohomology group can be represented by an ideal class of the form
\begin{equation} \label{element of class mod G}
\frakc = \sum_{\q \in S' \setm S} a_\q \rc{\q(L) \oh_{L,S}}
\end{equation}
with the $a_\q$ in $\Z$, writing $\class_S(L)$ additively. By assumption, the fractional ideal $\prod_{\q \in S' \setm S} (\q(L) \oh_{L,S})^{a_\q N}$ of $\oh_{L,S}$ is principal and therefore is generated over $\oh_{L,S}$ by $a$ for some $a \in L\st$.

By Lemma \ref{snake of 1q}, a lift of $\frakc$ to $R_{S'}$ is the element $(a_\q)_\q$ of $\bigoplus_{\q \in S' \setm S} \Z[G]$. Applying $N$, we see that $\delta_1(\rc{\frakc})$ is represented by the element $(a_\q N)_\q$ of $\Ker(s)$, that is, $a_\q N$ in the $\q$-component for $\q \in S' \setm S$ and $0$ elsewhere.

\subsection{The second connecting homomorphism, $\delta_2$} \label{sec: delta2}

We describe the map $V_{S'} \ra \mathcal{V}$. Having done this, it will be easy to see that the natural choice of a lift of $(a_\q N)_\q$ to $V_{S'}$ under $V_{S'} \ra W_{S'}$ does indeed lie in $A = \Ker(V_{S'} \ra \calV)$ -- see Lemma \ref{delta2 lift} below. The corresponding $1$-cocycle $G \ra \oh_{L,S}\st$ will then drop right out, as in Theorem \ref{image in H1 thm} below.

So, recall from (\ref{vs' def}) that $V_{S'} = (\bigoplus_{\p \in S'} \Z[G] \teno{\Z[G_\p]} V_\p) \oplus (\bigoplus_{\pp \not\in S'} U_\pp)$, where $V_\p$ is described for $\p \in S$ in \cite[Section 1]{rw:tateforglobal}. By \cite[pp.195--196]{tate:gcft}, there is $\lambda_\p \in \Hom_\Z(\aug{G_\p},C_L)$ such that we can choose the map $V_\p \ra \calV$ to send $(b,x)$ to $(\varsigma_\p(b) \lambda_\p(x),x)$ for $b \in L_\p\st$ and $x \in \aug{G_\p}$, where $\varsigma_\p : L_\p\st \ra C_L$ is the canonical inclusion. 

As for the description of $V_\q$ for $\q \in S' \setm S$, we see from \cite[Prop. 2]{rw:tateforglobal} that under our assumptions, $V_\q = U_\q \oplus \Z$ as $\Z$-modules, noting that $G_\q$ is trivial since $\q$ splits completely in $L/K$. (We are also using our notation convention that for a finite place $\q$ of $K$, $U_\q$ denotes $U_{\q(L)}$.) Thus the $\q$-component of $V_{S'}$ is $(\Z[G] \teno{\Z} U_\q) \oplus \Z[G]$. The map $(\Z[G] \teno{\Z} U_\q) \oplus \Z[G] \ra \mathcal{V}$ sends $(\alpha \ten u,\beta)$ to $(\varsigma_\q(u)^\alpha \varsigma_\q(\pi_\q)^\beta,0)$ where $\pi_\q$ is a fixed uniformizer of $L_\q$. 

Finally, all unit groups $U_\pp$ appearing in $V_{S'}$ are mapped to $0$ in $\calV$.

By definition of $a$ (see the sentence following (\ref{element of class mod G})), for all $\sigma \in G$ we have $a \sigma(\pi_\q)^{-a_\q}$ equal to a unit $\tilde{u}_{\q,\sigma}$ in $L_{\sigma \q(L)}$, which in turn can be expressed as $\sigma(u_{\q,\sigma})$ for a unique unit $u_{\q,\sigma}$ in $L_\q$. Further, $a$ is a unit in $L_\pp$ for every $\pp$ not above $S'$.

Let $v$ be the element of $V_{S'}$ which has $\sum_{\sigma \in G} \sigma \ten (\sigma^{-1}(a),0)$ in the $\p$-component for $\p \in S$, $(\sum_{\sigma \in G} \sigma \ten u_{\q,\sigma},a_\q N)$ in the $\q$-component for $\q \in S' \setm S$, and $a \in U_\pp \con L_\pp$ in the $\pp$-component for every prime $\pp$ of $L$ not above $S'$.

\begin{lemma} \label{delta2 lift}
With notation as above, the element $v$ of $V_{S'}$ is a lift of $(a_\q N)_\q$ to $A = \Ker(V_{S'} \ra \mathcal{V})$.
\end{lemma}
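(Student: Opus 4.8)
The plan is to verify two things: first, that the element $v$ defined above maps to $(a_\q N)_\q$ under $V_{S'} \ra W_{S'}$, and second — the substantive part — that $v$ lies in $A = \Ker(V_{S'} \ra \calV)$, i.e. that its image in $\calV$ is zero.

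For the first point, I would simply trace through the definition of the map $V_{S'} \ra W_{S'}$ on each component. On the $\p$-component for $\p \in S$, the element $\sum_{\sigma} \sigma \ten (\sigma^{-1}(a),0)$ has second coordinate $0$ in each $V_\p = L_\p\st \oplus \aug{G_\p}$, so it maps to $0$ in $\raug{\p}{G} \con W_{S'}$; similarly the unit components $a \in U_\pp$ map to nothing since those factors do not appear in $W_{S'}$. On the $\q$-component for $\q \in S' \setm S$, where $V_\q = U_\q \oplus \Z$ and the map to the copy of $\Z[G]$ in $W_{S'}$ reads off (via $\Z[G] \ten_{\Z[G_\q]} V_\q$, with $G_\q$ trivial) the $\Z[G]$-part, the element $(\sum_\sigma \sigma \ten u_{\q,\sigma}, a_\q N)$ maps to $a_\q N$. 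Hence $v \mapsto (a_\q N)_\q$ as required.

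For the second point, I would compute the image of $v$ in $\calV = C_L \oplus \aug{G}$ componentwise using the explicit descriptions of the maps $V_\p \ra \calV$ recalled just above. The $\aug{G}$-coordinate is clearly $0$ since every component of $v$ has trivial $\aug{G_\p}$-part (for $\p \in S$) or maps into the $C_L$-part only (for $\q \in S' \setm S$ and for the $U_\pp$). So it remains to show the $C_L$-coordinate vanishes. Assembling the contributions: for $\p \in S$ we get $\sum_\sigma \sigma \varsigma_\p(\sigma^{-1}(a)) = \sum_\sigma \varsigma_\p(a)$-type terms — more precisely, applying the formula $(b,x) \mapsto (\varsigma_\p(b)\lambda_\p(x), x)$ with $x = 0$, the $\p$-contribution is the image of $a$ under $L\st \ra \prod_\pp L_\pp\st \ra C_L$ localized at the places above $\p$; for $\q \in S' \setm S$ the formula $(\alpha \ten u, \beta) \mapsto (\varsigma_\q(u)^\alpha \varsigma_\q(\pi_\q)^\beta, 0)$ gives $\prod_\sigma \sigma \varsigma_\q(u_{\q,\sigma}) \cdot \varsigma_\q(\pi_\q)^{a_\q N}$; and the $U_\pp$-components for $\pp \nmid S'$ contribute $\varsigma_\pp(a)$. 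The key algebraic identity is the defining relation $a \sigma(\pi_\q)^{-a_\q} = \sigma(u_{\q,\sigma}) = \tilde u_{\q,\sigma}$, valid at the place $\sigma\q(L)$, together with the fact that $a$ is a unit away from $S'$ and generates $\prod_\q (\q(L)\oh_{L,S})^{a_\q N}$. Combining these shows that the total idele assembled from all components is exactly the principal idele of $a \in L\st$ (each local component is either $a$ itself, or $a$ times a correction by a uniformizer power that is cancelled by the explicit $\varsigma_\q(\pi_\q)^{a_\q N}$ factor), and principal ideles die in $C_L$.

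The main obstacle I anticipate is bookkeeping with the twisted $G$-action on $\calV$ and on the $V_\p$: the module $\calV$ is $C_L \oplus \aug{G}$ \emph{as a $\Z$-module only}, with the non-diagonal $G$-action of (\ref{G-action for h1}), and similarly each $V_\p$ carries a twisted $G_\p$-action, so one must be careful that the elements $\sum_\sigma \sigma \ten (\sigma^{-1}(a),0)$ etc. are genuinely $G$-invariant enough for the component-by-component image computation to make sense, and that the induced-module bookkeeping $\Z[G]\ten_{\Z[G_\p]}V_\p \ra \calV$ is handled correctly (this is why the sum over $\sigma \in G$ with $\sigma^{-1}(a)$ appears). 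Once the conventions are pinned down, the verification reduces to the identity in the previous paragraph, which is essentially the statement that $a$ was chosen to generate the relevant fractional ideal; the cocycle $G \ra \oh_{L,S}\st$ promised for Theorem \ref{image in H1 thm} will then come from comparing $\sigma v$ with $v$ inside $A$.
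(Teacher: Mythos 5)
Your proposal is correct and follows essentially the same route as the paper's own proof: you compute the image of $v$ in $\calV$ component by component via the explicit maps $V_\p \ra \calV$, use the defining relation $a = \sigma(u_{\q,\sigma}\pi_\q^{a_\q})$ at the places above $\q \in S' \setm S$, and observe that the assembled idele is the principal idele of $a$, hence trivial in $C_L$, while the lift property over $W_{S'}$ is immediate. No gaps; this matches the paper's argument in both structure and key identity.
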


\begin{proof}
We first show that $v \in \Ker(V_{S'} \ra \mathcal{V})$. For $\p \in S$, the image of $\sum_{\sigma \in G} \sigma \ten (\sigma^{-1}(a),0)$ is
\[ \sum_{\sigma \in G} \sigma(\varsigma_\p(\sigma^{-1}(a)),0) = \left(\prod_{\sigma \in G} \varsigma_\p(\sigma^{-1}(a))^\sigma,0\right) ,\]
and the element of $C_L$ appearing in the left-hand entry is represented by the idele having $a$ in all the components above $\p$ and $1$ everywhere else.

Now consider $\q \in S' \setm S$. The image of $(\sum_{\sigma \in G} \sigma \ten u_{\q,\sigma},a_\q N)$ in $\calV$ is $\sum_{\sigma \in G} (\varsigma_\q(u_{\q,\sigma} \pi_\q^{a_\q})^\sigma,0)$. Using that $a = \sigma(u_{\q,\sigma} \pi_\q^{a_\q})$, the element $\prod_{\sigma \in G} \varsigma_\q(u_{\q,\sigma} \pi_\q^{a_\q})^\sigma$ of $C_L$ is represented by the idele having $a$ in all the components above $\q$ and $1$ everywhere else. Remembering the contribution of $a \in U_\pp$ for each $\pp$ not above $S'$, which gets mapped to $(x,0)$ where $x$ is the idele class represented by the idele with $a$ in the $\pp$-component and $1$ everywhere else, we find that $v$ is mapped to $(\rc{a},0) = (1,0) \in \mathcal{V}$, and so lies in $A$.

That $v$ is a lift of $(a_\q N)_\q$ is clear.
\end{proof}

Recall the ideal class $\frakc$ defined in (\ref{element of class mod G}) and the element $a$ of $L\st$ appearing just after.

\begin{theorem} \label{image in H1 thm}
The image of $\rc{\frakc}$ under $H^{-1}(G,\class_S(L)) \ra H^1(G,\oh_{L,S}\st)$ is represented by the $1$-cocycle $\tau \mapsto \tau(a)/a$.
\end{theorem}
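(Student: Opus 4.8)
The plan is to carry the element $v$ of $V_{S'}$ constructed just before Lemma~\ref{delta2 lift} through the two connecting homomorphisms $\delta_1$ and $\delta_2$ explicitly. By Section~\ref{sec: delta1}, $\delta_1(\rc{\frakc})$ is represented by $(a_\q N)_\q \in \Ker(s)$, and by Lemma~\ref{delta2 lift} the element $v$ is a lift of $(a_\q N)_\q$ to $A = \Ker(V_{S'} \ra \calV)$. To compute $\delta_2$, I recall how the connecting homomorphism of $0 \ra \oh_{L,S}\st \ra A \ra \Ker(s) \ra 0$ acts in degree $0$: choose a lift $v$ in $A$ of an element of $\Ker(s)^G$ fixed modulo the relevant boundaries, and then $\sigma \mapsto \sigma v - v$ lands in $\oh_{L,S}\st$ (via the inclusion $\oh_{L,S}\st \hookrightarrow A$), giving the representing $1$-cocycle. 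So the whole computation reduces to evaluating $\sigma v - v$ and identifying it as an element of $\oh_{L,S}\st \con L\st$, sitting inside $A \con V_{S'} \con \bigl(\bigoplus_{\p \in S'} \Z[G] \teno{\Z[G_\p]} V_\p\bigr) \oplus \bigl(\bigoplus_{\pp \not\in S_L'} U_\pp\bigr)$.

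First I would recall, from the first column of the big diagram~(\ref{big diagram}), exactly how $\oh_{L,S}\st = J_S \cap (\text{something})$ — more precisely, the image of $\oh_{L,S}\st$ in $V_{S'}$ is the diagonal-type embedding coming from $J_S \ra V_{S'}$ restricted to the global units; concretely, a global $S$-unit $b \in \oh_{L,S}\st$ maps to the element having $\sum_{\sigma} \sigma \ten (\sigma^{-1}(b), 0)$ in each $\p$-component for $\p \in S$, $\sum_\sigma \sigma \ten (\text{the unit } \sigma^{-1}(b))$ together with $0 \in \Z[G]$ in each $\q$-component for $\q \in S' \setm S$, and $b \in U_\pp$ in each remaining $\pp$-component (this is just the idelic diagonal image of $b$, which makes sense since $b$ is a unit away from $S$). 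Then I would compute $\sigma v - v$ component by component. For the $\p$-components ($\p \in S$): $\sigma(\sum_\rho \rho \ten (\rho^{-1}(a),0)) = \sum_\rho \sigma\rho \ten (\rho^{-1}(a),0) = \sum_{\rho'} \rho' \ten ((\sigma^{-1}\rho')^{-1} \cdots)$ — reindexing shows this equals $\sum_{\rho'} \rho' \ten (\rho'^{-1}(\sigma(a)),0)$, so the difference is the $\p$-component of the image of $\sigma(a)/a$. For the $\q$-components ($\q \in S' \setm S$): using $a = \sigma(u_{\q,\sigma} \pi_\q^{a_\q})$ one checks that $\sigma(\sum_\rho \rho \ten u_{\q,\rho}, a_\q N) - (\sum_\rho \rho \ten u_{\q,\rho}, a_\q N)$ has trivial $\Z[G]$-part (since $N$ is $G$-fixed) and whose $\Z[G]\ten U_\q$-part, after the same reindexing, represents multiplication by the unit $\sigma(a)/a$ — the point being that $\sigma(u_{\q,\rho})$ and $u_{\q,\sigma\rho}$ differ by exactly the uniformizer-power bookkeeping that cancels $\pi_\q$. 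For the remaining $\pp$-components: $\sigma(a) - a$ (multiplicatively $\sigma(a)/a$) sits in $U_\pp$ because $a$ is a unit there. Assembling these, $\sigma v - v$ is precisely the image of $\sigma(a)/a \in \oh_{L,S}\st$ under the embedding $\oh_{L,S}\st \ra A$, so the $1$-cocycle is $\sigma \mapsto \sigma(a)/a$, which is what Theorem~\ref{image in H1 thm} asserts (with $\tau$ in place of $\sigma$).

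The main obstacle I anticipate is the precise identification of the embedding $\oh_{L,S}\st \hookrightarrow A \con V_{S'}$ and the careful reindexing in the $\q$-components: one must track that the element $u_{\q,\sigma}$, defined by $\sigma(u_{\q,\sigma}) = a\,\sigma(\pi_\q)^{-a_\q}$, transforms correctly under left multiplication by $\sigma$ inside $\Z[G] \teno{\Z} U_\q$, and that the discrepancy between $\sum_\rho \rho \ten u_{\q,\sigma\rho}$ (what appears in $\sigma v$ after reindexing) and $\sum_\rho \rho \ten u_{\q,\rho}$ (what appears in $v$) is exactly the diagonal image of the $S$-unit $\sigma(a)/a$ — not some $\pi_\q$-twisted version of it. A clean way to organize this is to observe that $v$ is by construction the image under $J_S \ra V_{S'}$ of (the idele class of) the idele all of whose relevant components are $a$ itself, so $\sigma v$ is the image of the idele with components $\sigma(a)$, and subtracting uses only the $J_S$-level (idelic) statement that the idele with every component $\sigma(a)/a$ is the diagonal image of the principal $S$-unit $\sigma(a)/a$; then functoriality of $J_S \ra V_{S'}$ under the $G$-action and the commutativity of the left column of~(\ref{big diagram}) finishes it without any component-by-component fuss. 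I would likely present it this way to keep the argument short.
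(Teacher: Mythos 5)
Your main argument is correct and is essentially the paper's own proof: lift $(a_\q N)_\q$ to the element $v$ of Lemma \ref{delta2 lift}, compute $\tau v - v$ component by component (with the same reindexing and the identity $\rho(u_{\q,\tau^{-1}\rho})\,\rho(u_{\q,\rho})^{-1} = \tau(a)/a$ in the $\q$-components), and identify the result as the diagonal image of the $S$-unit $\tau(a)/a$; the paper even makes your shortcut precise by observing that $\tau v - v$ must be a principal idele with $S$-unit entries, so that a single component (e.g.\ a $\pp$ not above $S'$) already determines the cocycle. One small correction to your closing reorganization: $v$ itself is \emph{not} in the image of $J_S \ra V_{S'}$, since its $W_{S'}$-component is $(a_\q N)_\q \neq 0$; only the difference $\tau v - v$ lies in $J_S$, so the identification cannot be reduced to pure functoriality of $J_S \ra V_{S'}$ applied to $v$ and must still go through the twisted $\q$-components (the $u_{\q,\sigma}$, $\pi_\q$ bookkeeping) or the one-component shortcut just mentioned.
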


\begin{proof}
Take $\tau \in G$. By the construction of the relevant connecting homomorphism associated to $0 \ra \oh_{L,S}\st \ra A \ra \Ker(s) \ra 0$, $\tau v - v$ must be a principal idele with $S$-unit entries, so it suffices to determine what that $S$-unit is in just one component. By considering $\pp$ not above $S'$, it is clear that it must be $\tau(a)/a$ as claimed, but we can verify this by calculating the idele in all components.

First consider $\q \in S' \setm S$.
\[ (\tau - 1)\left(\sum_{\sigma \in G} \sigma \ten u_{\q,\sigma},a_q N\right) = \left(\sum_{\sigma \in G} (\tau - 1)\sigma \ten u_{\q,\sigma},0\right) ,\]
and the element of $\Z[G] \teno{\Z} U_\q$ in the left-hand entry is
\begin{eqnarray*}
\sum_{\sigma \in G} (\tau \sigma) \ten u_{\q,\sigma} - \sum_{\sigma \in G} \sigma \ten u_{\q,\sigma} &=& \sum_{\sigma \in G} \sigma \ten (u_{\q,\tau^{-1} \sigma} u_{\q,\sigma}^{-1}) \\
&=& \sum_{\sigma \in G} \sigma \ten \sigma^{-1}(\sigma(u_{\q,\tau^{-1} \sigma}) \sigma(u_{\q,\sigma})^{-1}) .
\end{eqnarray*}
Under the identification of $\Z[G] \teno{\Z} U_\q$ with $\bigoplus_{\qq|\q} U_\qq$, this corresponds to the element of $\bigoplus_{\qq|\q} U_\qq$ having $\sigma(u_{\q,\tau^{-1} \sigma}) \sigma(u_{\q,\sigma})^{-1}$ in the $\sigma \q(L)$-component. However, by definition $\sigma(u_{\q,\tau^{-1} \sigma}) \sigma(u_{\q,\sigma})^{-1} = \tau(a)/a$.

Now consider $\p \in S$.
\begin{eqnarray*}
(\tau - 1) \sum_{\sigma \in G} \sigma \ten (\sigma^{-1}(a),0) &=& \sum_{\sigma \in G} (\tau \sigma - \sigma) \ten (\sigma^{-1}(a),0) \\
&=& \sum_{\sigma \in G} \sigma \ten (\sigma^{-1}(\tau(a)/a),0) ,
\end{eqnarray*}
which is the image under $\bigoplus_{\pp|\p} L_\pp\st \ra V_{S'}$ of the element having $\tau(a)/a$ in every component. This completes the verification.
\end{proof}

Incidentally, as an extra check, we can verify independently that for each $\tau \in G$, $\tau(a)/a \in \oh_{L,S}\st$. Namely, since
\[ a \oh_{L,S} = \prod_{\q \in S' \setm S} (\q(L) \oh_{L,S})^{a_\q N} \]
and $\tau N = N$, we see that $\tau(a) \oh_{L,S} = a \oh_{L,S}$, \ie $\tau(a)/a \in \oh_{L,S}\st$.

\section{Corollaries} \label{cors}

We give a sequence of corollaries of Theorem \ref{image in H1 thm}, which lead to Corollary \ref{kernel of norm}.

Recall once again the ideal class $\frakc$ defined in (\ref{element of class mod G}) and the element $a$ of $L\st$ defined just after.

\begin{cor} \label{thm cor 1}
With the above notation, $\rc{\frakc} \in \Ker(H^{-1}(G,\class_S(L)) \ra H^1(G,\oh_{L,S}\st))$ if and only if $a \in \oh_{L,S}\st \cdot K\st$.
\end{cor}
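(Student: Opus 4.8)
The plan is to apply Theorem \ref{image in H1 thm} directly: the image of $\rc{\frakc}$ under $H^{-1}(G,\class_S(L)) \ra H^1(G,\oh_{L,S}\st)$ is represented by the $1$-cocycle $c : \tau \mapsto \tau(a)/a$, so $\rc{\frakc}$ lies in the kernel precisely when $c$ is a coboundary, \ie when there exists $u \in \oh_{L,S}\st$ with $\tau(a)/a = \tau(u)/u$ for all $\tau \in G$. First I would rewrite this condition: $\tau(a)/a = \tau(u)/u$ for all $\tau$ is equivalent to $\tau(a/u) = a/u$ for all $\tau \in G$, \ie $a/u \in (L\st)^G = K\st$. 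Hence $c$ is a coboundary if and only if $a = u b$ for some $u \in \oh_{L,S}\st$ and $b \in K\st$, which is exactly the statement $a \in \oh_{L,S}\st \cdot K\st$.

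The remaining point to nail down is that $c$ is genuinely a cocycle $G \ra \oh_{L,S}\st$ (so that asking whether it is a coboundary makes sense), and this is already established: the discussion immediately following Theorem \ref{image in H1 thm} verifies that $\tau(a)/a \in \oh_{L,S}\st$ for every $\tau \in G$, using that $a \oh_{L,S} = \prod_{\q \in S' \setm S}(\q(L)\oh_{L,S})^{a_\q N}$ is $G$-stable because $\tau N = N$. So I would simply cite that. Then the forward direction is: if $a \in \oh_{L,S}\st \cdot K\st$, write $a = ub$ with $u \in \oh_{L,S}\st$, $b \in K\st$; then $\tau(a)/a = \tau(u)/u$ since $\tau(b) = b$, so $c$ is the coboundary of $u$ and $\rc{\frakc}$ maps to $0$. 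Conversely, if $\rc{\frakc}$ maps to $0$, then $c$ is a coboundary, so $\tau(a)/a = \tau(u)/u$ for some $u \in \oh_{L,S}\st$ and all $\tau$, whence $a/u$ is $G$-fixed, hence in $K\st$, giving $a \in \oh_{L,S}\st \cdot K\st$.

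There is no real obstacle here; the statement is essentially a restatement of Theorem \ref{image in H1 thm} combined with Hilbert 90-style bookkeeping on when an explicit $1$-cocycle splits. The only mild subtlety worth a sentence is making explicit that a $1$-cocycle of the special multiplicative form $\tau \mapsto \tau(a)/a$ is automatically a cocycle, and that it is a coboundary exactly when $a$ differs from an $S$-unit by a $G$-fixed element — this is immediate but worth spelling out since the cohomology here is Tate cohomology and one should confirm the usual description of $H^1$ in degree $1$ as ordinary group cohomology applies (it does, since degree $1 > 0$). I would keep the proof to a few lines, referring back to Theorem \ref{image in H1 thm} and the remark following it.

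\begin{proof}
By Theorem \ref{image in H1 thm}, the image of $\rc{\frakc}$ under the map $H^{-1}(G,\class_S(L)) \ra H^1(G,\oh_{L,S}\st)$ is the class of the $1$-cocycle $c : \tau \mapsto \tau(a)/a$ (which, as noted after Theorem \ref{image in H1 thm}, indeed takes values in $\oh_{L,S}\st$). Since $H^1$ here is ordinary group cohomology in positive degree, $\rc{\frakc}$ lies in the kernel if and only if $c$ is a coboundary, \ie if and only if there is $u \in \oh_{L,S}\st$ with $\tau(a)/a = \tau(u)/u$ for all $\tau \in G$.

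Suppose first that $a \in \oh_{L,S}\st \cdot K\st$, say $a = ub$ with $u \in \oh_{L,S}\st$ and $b \in K\st$. Then for all $\tau \in G$ we have $\tau(b) = b$, so $\tau(a)/a = \tau(u)/u$, and $c$ is the coboundary of $u$. Hence $\rc{\frakc}$ maps to $0$.

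Conversely, suppose $\rc{\frakc}$ maps to $0$. Then there is $u \in \oh_{L,S}\st$ with $\tau(a)/a = \tau(u)/u$ for all $\tau \in G$, equivalently $\tau(a/u) = a/u$ for all $\tau \in G$. Thus $a/u \in (L\st)^G = K\st$, so $a \in \oh_{L,S}\st \cdot K\st$. This completes the proof.
\end{proof}
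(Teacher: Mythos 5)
Your proof is correct and follows essentially the same route as the paper: apply Theorem \ref{image in H1 thm} to identify the image of $\rc{\frakc}$ with the class of the cocycle $\tau \mapsto \tau(a)/a$, then observe that this cocycle is a coboundary precisely when $a/u \in K\st$ for some $u \in \oh_{L,S}\st$, i.e.\ $a \in \oh_{L,S}\st \cdot K\st$. The extra remarks you include (that the cocycle takes values in $\oh_{L,S}\st$, and that $H^1$ in positive degree is ordinary group cohomology) are harmless elaborations of the same argument.
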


\begin{proof}
By Theorem \ref{image in H1 thm}, the cohomology class $\rc{\frakc}$ lies in the specified kernel if and only if the $1$-cocycle
\begin{eqnarray*}
G &\ra& \oh_{L,S}\st \\
\sigma &\mapsto& \sigma(a)/a
\end{eqnarray*}
is a $1$-coboundary. But $\sigma \mapsto \sigma(a)/a$ is a $1$-coboundary if and only if there is $u \in \oh_{L,S}\st$ such that $\sigma(a)/a = \sigma(u)/u$ for all $\sigma \in G$, which is the same as saying that $a/u$ is fixed by $G$, \ie $a/u \in K\st$.
\end{proof}

Let $\calF$ be the kernel of the composition
\[ \bigoplus_{\q \in S' \setm S} \Z \ra \class_S(L) \stackrel{N}{\ra} \class_S(L) ,\]
where the first map is given by sending an element $(a_\q)_\q$ to the class of the ideal $\prod_{\q \in S' \setm S} (\q(L) \oh_{L,S})^{a_\q}$. (There is no relationship with the $\calF$ used in the proof of Lemma \ref{Hab iso}.) Thus $\calF$ surjects onto $H^{-1}(G,\class_S(L))$. Note that we also have a map $\calF \ra \class_S(K)$ given by restricting
\begin{eqnarray*}
\bigoplus_{\q \in S' \setm S} \Z &\ra& \class_S(K) \\
(a_\q)_\q &\mapsto& \rc{\prod_{\q \in S} (\q \oh_{K,S})^{a_\q}}
\end{eqnarray*}
to $\calF$.

\begin{cor} \label{calF same kernels}
The maps
\[ \calF \ra H^{-1}(G,\class_S(L)) \ra H^1(G,\oh_{L,S}\st) \]
and
\[ \calF \ra \class_S(K) \]
have the same kernel.
\end{cor}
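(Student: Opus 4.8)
The plan is to reduce both kernels, via Corollary \ref{thm cor 1}, to ideal-theoretic conditions, and then to match those conditions using injectivity of the extension of fractional ideals along $\oh_{K,S} \ra \oh_{L,S}$.

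First I would fix $(a_\q)_\q \in \calF$ and put $\ay = \prod_{\q \in S' \setm S} \q^{a_\q}$, a fractional ideal of $\oh_{K,S}$. Writing $\frakc = \sum_{\q \in S' \setm S} a_\q \rc{\q(L)\oh_{L,S}} \in \class_S(L)$ for the ideal class of (\ref{element of class mod G}), I would observe that, since each $\q \in S' \setm S$ splits completely in $L/K$, the element $N \in \Z[G]$ carries the ideal $\q(L)\oh_{L,S}$ to $\q\oh_{L,S}$, so $N\frakc = \rc{\ay\oh_{L,S}}$; hence $(a_\q)_\q$ belongs to $\calF$ precisely because $\ay\oh_{L,S}$ is principal in $\oh_{L,S}$, \ie because there is an element $a \in L\st$ with $a\oh_{L,S} = \ay\oh_{L,S}$, which is exactly the $a$ occurring just after (\ref{element of class mod G}). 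Since the image of $(a_\q)_\q$ under $\calF \ra H^{-1}(G,\class_S(L))$ is $\rc{\frakc}$, Corollary \ref{thm cor 1} tells me that the composite $\calF \ra H^{-1}(G,\class_S(L)) \ra H^1(G,\oh_{L,S}\st)$ kills $(a_\q)_\q$ if and only if $a \in \oh_{L,S}\st \cdot K\st$; whereas the map $\calF \ra \class_S(K)$ sends $(a_\q)_\q$ to the class of $\ay$, which vanishes if and only if $\ay = b\,\oh_{K,S}$ for some $b \in K\st$. So everything comes down to the single equivalence: for $a \in L\st$ with $a\oh_{L,S} = \ay\oh_{L,S}$, one has $a \in \oh_{L,S}\st \cdot K\st$ if and only if $\ay$ is principal in $\oh_{K,S}$.

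For that equivalence, one implication is immediate: if $\ay = b\,\oh_{K,S}$ with $b \in K\st$, then $a\oh_{L,S} = b\,\oh_{L,S}$, so $a/b \in \oh_{L,S}\st$ and $a = (a/b)\,b$. For the converse, if $a = u\,b$ with $u \in \oh_{L,S}\st$ and $b \in K\st$, then $\ay\oh_{L,S} = a\oh_{L,S} = b\,\oh_{L,S} = (b\,\oh_{K,S})\oh_{L,S}$, and since the extension of fractional ideals along $\oh_{K,S} \ra \oh_{L,S}$ is injective --- the $\qq$-adic valuation of an extended ideal being $e(\qq/\q)$ times the $\q$-adic valuation of the original, for $\qq$ a prime of $\oh_{L,S}$ above a prime $\q$ of $\oh_{K,S}$ --- we deduce $\ay = b\,\oh_{K,S}$, which is principal.

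I do not expect a real obstacle here: the argument is essentially bookkeeping with the maps already in place, together with the one standard input (injectivity of ideal extension). The only point deserving a word of care is the identification of the $a$ attached to $(a_\q)_\q$ with the one in Corollary \ref{thm cor 1}; but $a$ is determined only up to multiplication by $\oh_{L,S}\st$, and the condition $a \in \oh_{L,S}\st \cdot K\st$ is insensitive to that ambiguity, so this causes no difficulty.
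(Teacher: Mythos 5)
Your proof is correct and follows essentially the same route as the paper: both reduce the first kernel condition via Corollary \ref{thm cor 1} to whether the generator $a$ lies in $\oh_{L,S}\st \cdot K\st$, and then compare with principality of $\prod_{\q \in S' \setm S} \q^{a_\q}$ in $\oh_{K,S}$ by a valuation argument (the paper computes $\nu_\q(a) = a_\q$ for $a \in K\st$ directly, while you phrase the same fact as injectivity of ideal extension along $\oh_{K,S} \ra \oh_{L,S}$).
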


\begin{proof}
Take $(a_\q)_\q \in \calF$ in the kernel of the first map, that is to say
\[ \prod_{\q \in S' \setm S} (\q(L) \oh_{L,S})^{a_\q N} = a \oh_{L,S} \]
for some $a \in \oh_{L,S}\st \cdot K\st$ by Corollary \ref{thm cor 1}. Of course, we may assume $a \in K\st$. Since $\q$ splits completely in $L/K$ for $\q \in S' \setm S$,
\[ \nu_{\q}(a) = \nu_{\q(L)}(a) = a_\q .\]
Since $a \in \oh_{K,S'}\st$, we have
\[ a \oh_{K,S} = \prod_{\q \in S' \setm S} (\q \oh_{K,S})^{a_\q} ,\]
and so the class of $\prod_{\q \in S' \setm S} (\q \oh_{K,S})^{a_\q}$ is zero in $\class_S(K)$. Thus $(a_\q)_\q$ is in the kernel of the second map.

Conversely, take $(a_\q)_\q \in \calF$ such that $\prod_{\q \in S' \setm S} (\q \oh_{K,S})^{a_\q}$ is principal as an ideal of $\oh_{K,S}$, \ie
\[ \prod_{\q \in S' \setm S} (\q \oh_{K,S})^{a_\q} = b \oh_{K,S} \]
for some $b \in K\st$. Choose $a \in L\st$ such that $\prod_{\q \in S' \setm S} (\q(L) \oh_{L,S})^{a_\q N} = a \oh_{L,S}$, as in the construction of the map $H^{-1}(G,\class_S(L)) \ra H^1(G,\oh_{L,S}\st)$. Then
\begin{eqnarray*}
a \oh_{L,S} &=& \prod_{\q \in S' \setm S} (\q(L) \oh_{L,S})^{a_\q N} \\
&=& \left(\prod_{\q \in S' \setm S} \q^{a_\q}\right) \oh_{L,S} \\
&=& b \oh_{L,S} ,
\end{eqnarray*}
so $a \in \oh_{L,S}\st \cdot K\st$. By Corollary \ref{thm cor 1}, this says that $(a_\q)_\q$ is in the kernel of the first map.
\end{proof}

\begin{cor} \label{units and class embedding}
There is an embedding $H^1(G,\oh_{L,S}\st) \ra \class_S(K)$.
\end{cor}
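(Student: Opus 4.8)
The plan is to realize $H^1(G,\oh_{L,S}\st)$ as a quotient of the group $\calF$ introduced above, and then to transport that description across the second map $\calF \ra \class_S(K)$ using Corollary \ref{calF same kernels}.

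First I would observe that $\calF$ surjects onto $H^1(G,\oh_{L,S}\st)$. Indeed, $\calF \ra H^{-1}(G,\class_S(L))$ is surjective by construction, and $H^{-1}(G,\class_S(L)) \ra H^1(G,\oh_{L,S}\st)$ is surjective because, by Proposition \ref{-1,1 iso}, it factors as the quotient map $H^{-1}(G,\class_S(L)) \ra H^{-1}(G,\class_S(L))/\rc{\calC}_S(L/K)$ followed by the canonical isomorphism $H^{-1}(G,\class_S(L))/\rc{\calC}_S(L/K) \can H^1(G,\oh_{L,S}\st)$. Composing gives a surjection $\calF \ra H^1(G,\oh_{L,S}\st)$; call its kernel $\calF_0$.

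Next, Corollary \ref{calF same kernels} asserts that this surjection and the map $\calF \ra \class_S(K)$ have the same kernel, namely $\calF_0$. Hence the map $\calF \ra \class_S(K)$ factors through an injection $\calF/\calF_0 \ra \class_S(K)$, while the first isomorphism theorem identifies $\calF/\calF_0$ with $H^1(G,\oh_{L,S}\st)$. The composite
\[ H^1(G,\oh_{L,S}\st) \;\iso\; \calF/\calF_0 \;\hookrightarrow\; \class_S(K) \]
is then the desired embedding.

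I do not anticipate a genuine obstacle here, since the argument is a purely formal consequence of the surjectivity of $\calF \ra H^1(G,\oh_{L,S}\st)$ together with Corollary \ref{calF same kernels}. The only point that deserves a moment's care is to confirm that the map $H^{-1}(G,\class_S(L)) \ra H^1(G,\oh_{L,S}\st)$ appearing in Corollary \ref{calF same kernels} is exactly the canonical one arising from the Tate sequence (equivalently, the one factored through Proposition \ref{-1,1 iso}), so that the two occurrences of the composite $\calF \ra H^1(G,\oh_{L,S}\st)$ genuinely agree; this is immediate from the way these maps were set up in Sections \ref{X Cl map} and \ref{Cl U map}.
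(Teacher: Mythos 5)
Your proposal is correct and follows essentially the same route as the paper: both obtain the embedding by identifying $H^1(G,\oh_{L,S}\st)$ with $\calF$ modulo the common kernel supplied by Corollary \ref{calF same kernels}, and then injecting that quotient into $\class_S(K)$. Your write-up merely spells out the surjectivity of $\calF \ra H^1(G,\oh_{L,S}\st)$, which the paper leaves implicit.
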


\begin{proof}
By Corollary \ref{calF same kernels}, we have
\[ H^1(G,\oh_{L,S}\st) \stackrel{\can}{\ra} \calF/\Ker(\calF \ra \class_S(K)) \ra \class_S(K) ,\]
the second map being injective.
\end{proof}

We observe that the above corollary gives an alternative justification that $H^1(G,\oh_{L,S}\st)$ embeds into $\class_S(K)$ to that found in \cite[Cor. 2]{rim:exactseq}.

\begin{definition}
Let $\inorm : \class_S(L) \ra \class_S(K)$ be the homomorphism on class-groups induced by the norm of ideals. Denote the resulting map $H^{-1}(G,\class_S(L)) \ra \class_S(K)$ by $\rc{\inorm}$.
\end{definition}

\begin{cor} \label{kernel of Nm bar}
There is an exact sequence
\[ 0 \ra \rc{\calC}_S(L/K) \ra H^{-1}(G,\class_S(L)) \stackrel{\rc{\inorm}}{\ra} \class_S(K) .\]
(Recall the definition of $\rc{\calC}_S(L/K)$ in Definition \ref{calC def}.)
\end{cor}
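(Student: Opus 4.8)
The plan is to derive the asserted exactness — which amounts to the single equality $\Ker(\rc{\inorm}) = \rc{\calC}_S(L/K)$, the leftmost injection being automatic — directly from Corollary \ref{calF same kernels} and Proposition \ref{-1,1 iso}, by transporting the computation of kernels to the auxiliary group $\calF$. The two facts I will lean on are that $\calF$ surjects onto $H^{-1}(G,\class_S(L))$ (recalled before Corollary \ref{calF same kernels}) and that $\Ker(H^{-1}(G,\class_S(L)) \ra H^1(G,\oh_{L,S}\st)) = \rc{\calC}_S(L/K)$, the latter being exactly Proposition \ref{-1,1 iso} read together with Proposition \ref{image of conn} and Definition \ref{calC def}, since the isomorphism in Proposition \ref{-1,1 iso} is the one induced by $H^{-1}(G,\class_S(L)) \ra H^{-1}(G,\nabla) \can H^1(G,\oh_{L,S}\st)$.

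First I would check that the composite
\[ \calF \ra H^{-1}(G,\class_S(L)) \stackrel{\rc{\inorm}}{\ra} \class_S(K) \]
agrees with the map $\calF \ra \class_S(K)$ defined just before Corollary \ref{calF same kernels}. Indeed, $(a_\q)_\q \in \calF$ maps to the class of $\frakc = \sum_{\q \in S' \setm S} a_\q \rc{\q(L)\oh_{L,S}}$ in $H^{-1}(G,\class_S(L))$; since every $\q \in S' \setm S$ was chosen to split completely in $L/K$, the ideal norm of $\q(L)$ is $\q$ itself, so $\inorm(\frakc) = \rc{\prod_{\q \in S' \setm S} (\q\oh_{K,S})^{a_\q}}$, which is precisely the image of $(a_\q)_\q$ under $\calF \ra \class_S(K)$. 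The one point needing care is exactly this identity $\norm_{L/K}(\q(L)) = \q$ rather than a proper power of $\q$, and it is here that the hypothesis $G_{\p_0} = G$, used in Section \ref{sec: ext of nabla} to force the primes of $S' \setm S$ to split completely, enters; the rest is formal.

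Write $\pi : \calF \ra H^{-1}(G,\class_S(L))$ for the surjection above. Since $\rc{\inorm} \circ \pi$ equals the map $\calF \ra \class_S(K)$, we get $\pi^{-1}(\Ker\rc{\inorm}) = \Ker(\calF \ra \class_S(K))$, whence, $\pi$ being surjective, $\Ker\rc{\inorm} = \pi(\Ker(\calF \ra \class_S(K)))$. Applying the same reasoning to $\calF \ra H^{-1}(G,\class_S(L)) \ra H^1(G,\oh_{L,S}\st)$ gives $\Ker(H^{-1}(G,\class_S(L)) \ra H^1(G,\oh_{L,S}\st)) = \pi(\Ker(\calF \ra H^1(G,\oh_{L,S}\st)))$, the last arrow being the composite. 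By Corollary \ref{calF same kernels} the two kernels inside $\calF$ coincide, so
\[ \Ker\rc{\inorm} = \Ker\bigl(H^{-1}(G,\class_S(L)) \ra H^1(G,\oh_{L,S}\st)\bigr), \]
and by Proposition \ref{-1,1 iso} the right-hand side is $\rc{\calC}_S(L/K)$. This gives the exactness at $H^{-1}(G,\class_S(L))$ and completes the proof. I do not anticipate a real obstacle: the only subtlety is the norm computation in the second paragraph, and the elementary fact that kernels can be pushed forward along the surjection $\pi$.
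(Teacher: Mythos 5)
Your proposal is correct and is essentially the paper's own argument: both rest on Proposition \ref{-1,1 iso} (read, as you do, as saying $\rc{\calC}_S(L/K) = \Ker(H^{-1}(G,\class_S(L)) \ra H^1(G,\oh_{L,S}\st))$), on Corollary \ref{calF same kernels}, and on the observation that, since every $\q \in S' \setm S$ splits completely, the map $\calF \ra \class_S(K)$ computes $\rc{\inorm}$ of the corresponding class. The only difference is organizational: the paper routes through the injection $H^1(G,\oh_{L,S}\st) \ra \class_S(K)$ of Corollary \ref{units and class embedding} and identifies the composite $H^{-1}(G,\class_S(L)) \ra H^1(G,\oh_{L,S}\st) \ra \class_S(K)$ with $\rc{\inorm}$, whereas you compare kernels by pushing them forward along the surjection $\calF \ra H^{-1}(G,\class_S(L))$ --- a harmless repackaging of the same computation.
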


\begin{proof}
By Proposition \ref{-1,1 iso}, $\rc{\calC}_S(L/K)$ is the kernel of $H^{-1}(G,\class_S(L)) \ra H^1(G,\oh_{L,S}\st)$, which is the kernel of
\begin{equation} \label{should be norm}
H^{-1}(G,\class_S(L)) \ra H^1(G,\oh_{L,S}\st) \ra \class_S(K)
\end{equation}
by Corollary \ref{units and class embedding}. It remains to show that the map in (\ref{should be norm}) is $\rc{\inorm}$. An element $x$ of $H^{-1}(G,\class_S(L))$ is represented by an ideal class containing an ideal of the form
\[ \prod_{\q \in S' \setm S} (\q(L) \oh_{L,S})^{a_\q} \]
with $(a_\q)_\q \in \calF$. If $y$ is the image of $x$ in $H^1(G,\oh_{L,S}\st)$, then the image of $y$ in $\class_S(K)$ is obtained by first lifting $y$ to $\calF$ via $H^{-1}(G,\class_S(L))$, and then applying the map $\calF \ra \class_S(K)$. However, such a lift of $y$ is $(a_\q)_\q$, whose image in $\class_S(K)$ is the ideal class containing
\[ \prod_{\q \in S' \setm S} (\q \oh_{K,S})^{a_\q} .\]
This ideal class is clearly $\rc{\inorm}(x)$.
\end{proof}

Corollary \ref{kernel of Nm bar} establishes that $\rc{\calC}_S(L/K) = \Ker(\rc{\inorm})$. We remark in passing, although we will not use this fact, that this kernel is in fact the ``divisor knot'' as defined by Jehne in \cite[Ch.I, Section 1]{jehne:knots} -- see the isomorphism before Theorem 1 of \cite{jehne:knots} and the definition in \cite[(1.5)]{jehne:knots}. (Jehne's divisor knot contains the earlier divisor knot of Scholz \cite{scholz:normenreste}.)

Now, consider the following definition:

\begin{definition} \label{def D}
Let $D(\class_S(L))$ be the subgroup of $\class_S(L)$ generated by
\[ \{(\sigma - 1) \frakc \sat \frakc \in \class_S(L), \sigma \in G\} .\]
\end{definition}

Observe that $D(\class_S(L))$ is closed under the action of $G$ and is therefore a $\Z[G]$-submodule of $\class_S(L)$.

Noting that
\begin{equation} \label{kernel of Nm inclusions}
D(\class_S(L)) \con \Ker(\inorm) \con \Ker(N : \class_S(L) \ra \class_S(L)) ,
\end{equation}
we obtain an exact sequence
\[ 0 \ra D(\class_S(L)) \ra \Ker(\inorm) \ra \Ker(\rc{\inorm}) \ra 0 .\]
Thus we arrive at an exact sequence
\begin{equation} \label{break up kernel of Nm}
0 \ra D(\class_S(L)) \ra \Ker(\inorm) \ra \rc{\calC}_S(L/K) \ra 0 .
\end{equation}

\begin{definition} \label{def calC}
Let $\calC_S(L/K)$ be the subgroup of $\class_S(L)$ generated by
\[ \{\frakc_\p(\tau) \sat \p \in S \setm \{\p_0\}, \tau \in G_\p \setm \{1\}\} .\]
\end{definition}

The following gives a description of the kernel of the norm map $\class_S(L) \ra \class_S(K)$.

\begin{cor} \label{kernel of norm}
There is an exact sequence
\[ 0 \ra D(\class_S(L)) + \calC_S(L/K) \ra \class_S(L) \stackrel{\inorm}{\ra} \class_S(K) \ra 0 .\]
(But see the remark, after the proof, concerning existing literature.)
\end{cor}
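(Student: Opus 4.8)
The plan is to obtain $\Ker(\inorm)$ by combining Corollary~\ref{kernel of Nm bar} with the exact sequence~(\ref{break up kernel of Nm}), and then to prove surjectivity of $\inorm$ separately using the hypothesis $G_{\p_0}=G$.

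For the kernel, I would exploit that the map $\rc{\inorm}\colon H^{-1}(G,\class_S(L))\ra\class_S(K)$ of Corollary~\ref{kernel of Nm bar} is, by construction, the map induced by $\inorm$ on the subquotient $H^{-1}(G,\class_S(L))=\Ker(N)/D(\class_S(L))$ of $\class_S(L)$, so that $\rc{\inorm}(\rc{\frakc})=\inorm(\frakc)$ for $\frakc\in\Ker(N)$. Since each $\rc{\frakc_\p(\tau)}$ lies in $\rc{\calC}_S(L/K)=\Ker(\rc{\inorm})$ by Corollary~\ref{kernel of Nm bar}, we get $\inorm(\frakc_\p(\tau))=0$; hence $\frakc_\p(\tau)\in\Ker(\inorm)$ and it is a lift of $\rc{\frakc_\p(\tau)}$ along the surjection $\Ker(\inorm)\twoheadrightarrow\rc{\calC}_S(L/K)$ of~(\ref{break up kernel of Nm}). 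As $\rc{\calC}_S(L/K)$ is generated by the $\rc{\frakc_\p(\tau)}$ (Definition~\ref{calC def}), the exactness of~(\ref{break up kernel of Nm}) then forces $\Ker(\inorm)=D(\class_S(L))+\calC_S(L/K)$. Exactness of the claimed sequence on the left is automatic, since $D(\class_S(L))+\calC_S(L/K)\ra\class_S(L)$ is just the inclusion of a subgroup.

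The remaining, and I expect hardest, point is surjectivity of $\inorm\colon\class_S(L)\ra\class_S(K)$; this is the only step requiring genuine class field theory, and its crux is the equality $L\cap K_S=K$, which is exactly where $G_{\p_0}=G$ enters. To prove that equality: $L\cap K_S$ is a Galois subextension of $L/K$ lying in $K_S$, hence unramified over $K$ with every place of $S$ splitting completely in it, so the decomposition group of $\p_0$ in $\gal{(L\cap K_S)/K}$ is trivial; but that decomposition group is the image of $G_{\p_0}=G$ under restriction $G\twoheadrightarrow\gal{(L\cap K_S)/K}$, so $\gal{(L\cap K_S)/K}=1$. Surjectivity of $\inorm$ then follows either from the norm-compatibility of the reciprocity maps together with the easily checked inclusion $K_S\con L_S$ (which identify $\inorm$ with the restriction $\gal{L_S/L}\ra\gal{K_S/K}$, of image $\gal{K_S/(L\cap K_S)}=\gal{K_S/K}$), or directly by Chebotarev: any class of $\class_S(K)$ is the class of a prime $\mathfrak{a}$ of $K$ unramified in $LK_S$, and because $L\cap K_S=K$ the element $(1,\ls{K_S/K}{\mathfrak{a}})$ belongs to $\gal{LK_S/K}=\{(g,h)\colon g|_{L\cap K_S}=h|_{L\cap K_S}\}$, so a prime $\p$ of $K$ with this Frobenius splits completely in $L/K$ and has $\rc{\p}$ the given class, whence that class lies in the image of $\inorm$ (it is the class of $\inorm(\pp)=\p$ for any $\pp\mid\p$). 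Combining this with the kernel computation yields the asserted exact sequence.
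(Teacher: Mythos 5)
Your proposal is correct and follows essentially the same route as the paper: the kernel identification is exactly the paper's argument, combining Corollary \ref{kernel of Nm bar} with the sequence (\ref{break up kernel of Nm}) by lifting the generators $\rc{\frakc_\p(\tau)}$ to $\calC_S(L/K)\con\Ker(\inorm)$. The only difference is that you spell out the surjectivity of $\inorm$ (via $L\cap K_S=K$ and class field theory or Chebotarev), which the paper simply asserts as a consequence of $\p_0$ being non-split in $L/K$; your elaboration is correct.
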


\begin{proof}
We see from (\ref{kernel of Nm inclusions}) and Corollary \ref{kernel of Nm bar} that $D(\class_S(L))$ and $\calC_S(L/K)$ are in the kernel of $\inorm$. Conversely, take $\frakc \in \Ker(\inorm)$, so that $\rc{\frakc} \in \Ker(\rc{\inorm})$, and write $\rc{\frakc} \in H^{-1}(G,\class_S(L))$ as
\[ \rc{\frakc} = \sum_{\p \in S \setm \{\p_0\}} \sum_{\tau \in G_\p \setm \{1\}} a_{\p,\tau} \rc{\frakc_\p(\tau)} \]
with $a_{\p,\tau} \in \Z$. Then $\frakc - \sum_{\p \in S \setm \{\p_0\}} \sum_{\tau \in G_\p \setm \{1\}} a_{\p,\tau} \frakc_\p(\tau)$ is in the kernel of $\Ker(\inorm) \ra \rc{\calC}_S(L/K)$ and therefore is in $D(\class_S(L))$ by (\ref{break up kernel of Nm}), thus showing that $\frakc \in D(\class_S(L)) + \calC_S(L/K)$.

As for the surjectivity of $\class_S(L) \ra \class_S(K)$, this follows from the fact that $\p_0$ remains non-split in $L/K$.
\end{proof}

\begin{remark}
The author is aware that a description of the kernel of the norm map as found in Corollary \ref{kernel of norm} should follow from work of, for example, Jaulent \cite{jaulent:these}, Jehne \cite{jehne:knots}, Fr\"ohlich \cite{frohlich:central} and Furuta \cite{furuta:genus} on knots, central classes and genus fields. We include our derivation of this description as an illustration of a different approach, via Ritter and Weiss's version of the Tate sequence, a derivation that, after our having obtained Theorem \ref{image in H1 thm}, is reasonably short.
\end{remark}

\subsection*{Acknowledgments}

The author would like to thank Al Weiss for a number of interesting discussions, and the referee for providing helpful remarks.

\end{document}